\newcommand{\cE}{\ensuremath{\mathcal E}}
\newcommand{\cP}{\ensuremath{\mathcal P}}
\newcommand{\TINY}{\ensuremath{\mathrm{TINY}}}
\newcommand{\ATYP}{\ensuremath{\mathrm{ATYP}}}
\newcommand{\conn}{\ensuremath{\mathrm{conn}}}
\newcommand{\kconn}{\ensuremath{k\mathrm{-conn}}}
\newcommand{\Bin}{\ensuremath{\mathrm{Bin}}}
\newcommand{\eps}{\varepsilon}
\renewcommand{\phi}{\varphi}
\DeclareMathOperator*{\E}{\mathbb{E}}
\DeclareMathOperator*{\N}{\mathbb{N}}
\DeclareMathOperator*{\R}{\mathbb{R}}
\DeclarePairedDelimiter\ceil{\lceil}{\rceil}
\DeclarePairedDelimiter\floor{\lfloor}{\rfloor}
\let\setminus=\smallsetminus
\newcommand{\osref}[2]{%
  \setlength\abovedisplayskip{5pt plus 2pt minus 2pt}
  \setlength\abovedisplayshortskip{5pt plus 2pt minus 2pt}
  \ensuremath{\overset{\text{#1}}{#2}}
}
\newcommand{\Gnp}{G_{n, p}}
\newcommand{\Xa}{X_{\mathrm{atyp}}}
\newcommand{\Xt}{X_{\mathrm{typ}}}
\newcommand{\binoms}[2]{{\textstyle\binom{#1}{#2}}} % small binomial
\definecolor{royalazure}{rgb}{0.0, 0.22, 0.66}
\definecolor{forestgreen}{rgb}{0.13, 0.55, 0.13}
\declaretheorem[parent=section]{theorem}
\declaretheorem[sibling=theorem]{lemma}
\declaretheorem[sibling=theorem]{proposition}
\declaretheorem[sibling=theorem]{claim}
\declaretheorem[sibling=theorem]{corollary}
\declaretheorem[sibling=theorem,style=definition]{definition}
\setlist{itemsep=0.1em, topsep=0.1em, parsep=0.1em, partopsep=0.1em}
\title{On resilience of connectivity in the evolution of random graphs}
\author{}
\renewcommand\@date{{%
  \vspace{-\baselineskip}%
  \large\centering
  \begin{tabular}{@{}c@{}}
    Luc Haller \\
    \normalsize \texttt{hallerl@student.ethz.ch}
  \end{tabular}%
  \qquad \qquad
  \begin{tabular}{@{}c@{}}
    Milo\v{s} Truji\'{c}\thanks{author was supported by grant no.\ 200021 169242
    of the Swiss National Science Foundation.} \\
    \normalsize \texttt{mtrujic@inf.ethz.ch}
  \end{tabular}

  \bigskip

  Institute of Theoretical Computer Science \\
  ETH Z\"{u}rich, 8092 Z\"{u}rich, Switzerland

  \bigskip

  \date{}
}}
\begin{document}
\maketitle

\begin{abstract}
  In this note we establish a resilience version of the classical hitting time
  result of Bollob\'{a}s and Thomason regarding connectivity. A graph $G$ is
  said to be $\alpha$-resilient with respect to a monotone increasing graph
  property $\cP$ if for every spanning subgraph $H \subseteq G$ satisfying
  $\deg_H(v) \leq \alpha \deg_G(v)$ for all $v \in V(G)$, the graph $G - H$
  still possesses $\cP$. Let $\{G_i\}$ be the random graph process, that is a
  process where, starting with an empty graph on $n$ vertices $G_0$, in each
  step $i \geq 1$ an edge $e$ is chosen uniformly at random among the missing
  ones and added to the graph $G_{i - 1}$. We show that the random graph process
  is almost surely such that starting from $m \geq (\tfrac{1}{6} + o(1)) n \log
  n$, the largest connected component of $G_m$ is $(\tfrac{1}{2} -
  o(1))$-resilient with respect to connectivity. The result is optimal in the
  sense that the constants $1/6$ in the number of edges and $1/2$ in the
  resilience cannot be improved upon. We obtain similar results for
  $k$-connectivity.
\end{abstract}

%!TEX root = connectivity_resilience.tex
\section{Introduction}\label{sec:introduction}

Random graph theory dates back to 1959 and two seminal papers of Erd\H{o}s and
R\'{e}nyi~\cite{erdds1959random} and Gilbert~\cite{gilbert1959random}. By now it
is a well-studied research area with applications in different fields. A more
recent trend, started with a paper of Sudakov and Vu~\cite{sudakov2008local}, is
to study the {\em resilience} of certain properties in random graphs, which has
since attracted considerable attention (e.g.~\cites{allen2016bandwidth,
  balogh2011local, balogh2012corradi, bottcher2013almost, lee2012dirac,
vskoric2018local} and recent surveys~\cites{bottcher2017large,
sudakov2017robustness}). In other words, knowing that a random graph behaves in
a certain way, the question then becomes how robust that behaviour is with
respect to modifications (e.g.\ deletion of the edges). Two of the most
prominent examples of robustness of graphs can be seen in the results of
Tur\'{a}n~\cite{turan1941extremalaufgabe} and Dirac~\cite{dirac1952some}.
Tur\'{a}n's theorem determines how many edges one has to remove from a complete
graph on $n$ vertices $K_n$ before it becomes $K_r$-free ({\em global
resilience}), while Dirac's theorem measures how many edges touching each vertex
one is allowed to remove before the graph ceases to be Hamiltonian ({\em local
resilience}). The removal of all the edges incident to a vertex with minimum
degree prevents the containment of any spanning structure (e.g.\ connectivity,
Hamiltonicity, perfect matchings, etc.) giving a trivial upper bound on the
global resilience in these cases. In order to study robustness of such
properties it is natural to turn our attention to the local resilience.

In a more formal setting, local resilience of a graph $G$ with respect to a
monotone increasing graph property $\cP$ is defined as follows:

\begin{definition}[$\alpha$-resilience]
  \label{def:local-resilience}
  Let $G = (V, E)$ be a graph, $\cP$ a monotone increasing graph property, and
  $\alpha \in [0, 1]$ a constant. We say that $G$ is $\alpha$-resilient with
  respect to $\cP$ if for every spanning subgraph $H \subseteq G$ satisfying
  $\deg_H(v) \leq \alpha \deg_G(v)$ for all $v \in V$, we have $G - H \in \cP$.
\end{definition}

Generally, being $\alpha$-resilient means that an adversary cannot destroy
property $\cP$ by removing an arbitrary $\alpha$-fraction of the edges incident
to every vertex. In light of this, Dirac's theorem states that the complete
graph $K_n$ is $(1/2)$-resilient with respect to Hamiltonicity. On the other
hand, allowing the adversary to remove a bit more than a $(1/2)$-fraction of the
edges incident to each vertex proves to be enough in order to destroy all
Hamilton cycles, and even disconnect the graph. There is a vast number of
important results in extremal combinatorics which study `resilience' of the
complete graph and we refer the interested reader to
e.g.~\cites{bottcher2009proof, hajnal1970proof, komlos2001proof, komlos1998posa,
kouider1996covering}.

In this paper we show that one of the first results in random graph theory holds
in a resilient fashion. A fundamental result of Erd\H{o}s and
R\'{e}nyi~\cite{erdds1959random} shows that if $m = \tfrac{n}{2} (\log n +
c_n)$, then {\em with high probability}\footnote{We say that an event holds with
high probability, w.h.p.\ for short, if the probability of it to hold tends to
$1$ as $n \to \infty$.} $G_{n, m}$, a graph drawn uniformly at random among all
graphs with $n$ vertices and $m$ edges, is connected if $c_n \to \infty$. Its
strongest `hitting time' version was shown much later by Bollob\'{a}s and
Thomason~\cite{bollobas1985random} and is the one which we consider here (see
also Bollob\'{a}s~\cite{bollobas1998random}*{Chapter 7}). For an integer $n \in
\N$ we let $\{G_i\}$ define the {\em random graph process} as follows. Let $G_0$
be an empty graph on $n$ vertices; at every step $i \in \{1, \ldots,
\binom{n}{2}\}$ let $G_i$ be obtained from $G_{i - 1}$ by choosing an edge $e
\notin G_{i - 1}$ uniformly at random and adding it to $G_i$. This defines a
sequence of nested graphs $\{G_i\}_{i = 0}^{N}$, where $N = \binom{n}{2}$, $G_0$
is an empty graph, and $G_N$ a complete graph on $n$ vertices.

A trivial necessary condition for a graph to be connected is that it has minimum
degree at least one. It turns out that in the random graph process this is also
sufficient. In other words, the edge $e_i$ that makes the last isolated vertex
disappear w.h.p.\ also makes the graph $G_i$ connected. Put into a more formal
setting, for a monotone increasing graph property $\cP$ and a random graph
process $\{G_i\}$, we define the {\em hitting time} with respect to $\cP$, and
write $\tau_\cP$, as:
\[
  \tau_{\cP} = \min{\{m \geq 0 \colon G_m \in \cP \}}.
\]
One of the results of Bollob\'{a}s and Thomason~\cite{bollobas1985random} proves
that w.h.p.\ $\tau_1 = \tau_\conn$, where $\tau_1$ denotes the hitting times of
`having minimum degree at least one' and $\tau_\conn$ that of `connectivity'.

The hitting time statements about the random graph processes are in some sense
the most precise results concerning random graphs one could hope for.
Unsurprisingly, they are also the most difficult to analyse and only a handful
of statements are known regarding some basic graph properties (such as
connectivity, perfect matchings, Hamiltonicity, etc.).

Our contribution is to show that almost surely not only the graph is connected
at the point $\tau_1$, but it is resiliently connected, that is it stays
connected even after the adversary removes at most a $(1/2 - o(1))$-fraction of
the edges touching each vertex. This continues the line of research recently
initiated by Nenadov, Steger, and the second
author~\cite{nenadov2018resilience}, and independently
Montgomery~\cite{montgomery2017}, of studying resilience of properties in random
graph processes. We further obtain similar results with respect to
$k$-connectivity.

\subsection{Our results}

Our primary objective is to show the resilience of connectivity in the random
graph process. Instead of dealing with the random graph process $\{G_i\}$
directly, one may find it more convenient to think about the Erd\H{o}s-R\'{e}nyi
random graph $G_{n, m}$. It is a well-known fact that for every $m \in \{1,
\ldots, \binoms{n}{2}\}$ the graph $G_m$ has the same distribution as $G_{n,
m}$. Moreover, as long as $m$ is above a certain value (which is the case for
us), standard connections between the model $G_{n, m}$ and the {\em binomial
random graph} model {$\Gnp$}\footnote{For an integer $n$ and a function $0 < p =
p(n) < 1$ we denote by $\Gnp$ the probability space of graphs on $n$ vertices
where each edge is present with probability $p$ independently of other edges.}
lead to direct consequences for this model as well.

Our first result shows that as long as $m$ is not too small, the largest
connected component of $G_{n, m}$, which we refer to as the {\em giant}, is
resilient with respect to connectivity.

\begin{restatable}{theorem}{conntheorem}\label{thm:main-thm-conn}
  Let $\eps > 0$ be a constant and consider the random graph process $\{G_i\}$.
  Then w.h.p.\ for every $m \geq \frac{1 + \eps}{6} n\log n$ we have that the
  giant of $G_m$ is $(1/2 - \eps)$-resilient with respect to connectivity.
\end{restatable}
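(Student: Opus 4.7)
The plan is to work in the binomial random graph $\Gnp$: since $G_m \sim G_{n,m}$ and $G_{n,m}$ is closely coupled to $\Gnp$ with $p = m/\binom{n}{2}$, it suffices to prove that for every $p \geq p_0 := \tfrac{(1+\eps)\log n}{3n}$ the giant of $\Gnp$ is $(\tfrac12-\eps)$-resilient with probability $1 - \exp(-\Omega(\eps^2 n))$, and then union-bound over the $O(n^2)$ possible values of $m$. Writing $C$ for the giant of $\Gnp$, for the adversary to disconnect a partition $(S, V(C)\setminus S)$ every $v \in S$ must satisfy $\deg_C(v, V(C)\setminus S) \leq \lfloor (\tfrac12-\eps)\deg_C(v) \rfloor$, and analogously on the other side. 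A convenient sufficient condition for $(\tfrac12-\eps)$-resilience is therefore that for every $\emptyset \neq S \subsetneq V(C)$ with (WLOG) $|S| \leq |V(C)|/2$,
\[
  e_C\bigl(S, V(C)\setminus S\bigr) \;>\; \bigl(\tfrac12-\eps\bigr) \sum_{v \in S} \deg_C(v),
\]
so that some $v \in S$ has more than a $(\tfrac12-\eps)$-fraction of its edges across the cut. Using $\sum_{v\in S}\deg_C(v) = 2\,e_C(S) + e_C(S, V(C)\setminus S)$ this reduces to the sparsity inequality $(\tfrac12+\eps)\,e_C(S, V(C)\setminus S) > (1-2\eps)\,e_C(S)$, whose expected ratio $\E[e_C(S, V(C)\setminus S)]/\E[e_C(S)] \approx 2|V(C)\setminus S|/|S|$ is at least $2$ throughout this regime.

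I verify the sparsity inequality uniformly by splitting on $|S|$. For \emph{tiny} sets $|S| \leq s_0 := \tfrac{1+\eps/2}{6}\log n$ (slightly less than $\tfrac{np}{2}$): let $\Xa \subseteq V(C)$ be the atypical vertices of degree below $(1-\tfrac{\eps}{8})np$; a Chernoff-plus-first-moment computation gives $|\Xa| \leq n^{1-c\eps^2}$ w.h.p. If $S \not\subseteq \Xa$, then any $v \in S \setminus \Xa$ satisfies $\deg_C(v, V(C)\setminus S) \geq \deg_C(v) - (|S|-1) > (\tfrac12-\eps)\deg_C(v)$ by the choice of $s_0$, and is the desired witness. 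The residual case $S \subseteq \Xa$ is controlled by a first-moment bound on small dense sets, using that at $p \geq p_0$ the expected number of pairs of adjacent atypical vertices is $o(1)$ after suitable truncation. For \emph{large} sets $s_0 < |S| \leq |V(C)|/2$, I apply Chernoff to the binomials $e_C(S) \sim \Bin(\binom{|S|}{2}, p)$ and $e_C(S, V(C)\setminus S) \sim \Bin(|S|(n-|S|), p)$ (the restriction from $\Gnp$ to $C$ loses only $o(n)$ non-giant vertices), obtaining a per-$S$ failure probability that beats the $\binom{n}{|S|}$ union bound with ample room.

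The most delicate part is the boundary regime $|S|$ close to $|V(C)|/2$, where the expected ratio degenerates to $\tfrac12$ and only $\Theta(\eps)$ relative slack remains; fortunately the expected counts are $\Theta(n\log n)$, so Chernoff provides $O((\log n)^{-1/2}) = o(\eps)$ relative fluctuations, which is tight but sufficient after the $\binom{n}{|S|}$ union bound. The more substantive obstacle is controlling atypical clusters inside $\Xa$: for small $\eps$ one has $|\Xa| \gg s_0$, so $\Xa$ is far from independent, and one must rule out any $S \subseteq \Xa$ of size $\leq s_0$ that is simultaneously dense inside and sparse outside. This is exactly where the lower bound $m \geq \tfrac{1+\eps}{6}n\log n$ is essential: at strictly lower densities one finds with positive probability a small set of low-degree vertices forming such a `bubble' that the adversary can cut off, which is what forces the constant $1/6$ to be optimal.
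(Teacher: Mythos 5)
Your plan founders on the probability bookkeeping, and this is not a technicality --- it is exactly the obstacle this paper is built to circumvent. You claim that for each fixed $p \geq (1+\eps)\log n/(3n)$ the giant is $(1/2-\eps)$-resilient with probability $1-\exp(-\Omega(\eps^2 n))$, and then union bound over all $O(n^2)$ values of $m$. No such bound is available: near the connectivity threshold the resilience property fails because of \emph{local} configurations (e.g.\ a cherry --- two degree-one vertices sharing a degree-three neighbour --- or, in your language, a small cluster of low-degree vertices forming a ``bubble''), and the probability that no such configuration exists is only $1 - n^{-c(\eps)}$, where $c(\eps)$ is small when $\eps$ is small (for instance, three tiny vertices within a $3$-neighbourhood appear with probability about $n^{-\eps+o(1)}$). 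So the per-$m$ failure probability is polynomially small with a small exponent, and a union bound over the $\Theta(n\log n)$ relevant values of $m$ collapses. The paper states this explicitly at the start of Section 3 (failure probability roughly $e^{-\alpha\cdot 2m/n}$ for a small $\alpha$) and resolves it differently: it sandwiches all graphs $G_m$ with $m_0 \le m \le (1+\eps/6)m_0$ between a single coupled pair $G^- \subseteq G^+$ (with $\TINY$/$\ATYP$ defined relative to $G^-$ but neighbourhood conditions verified in $G^+$), so that only constantly many batches are needed up to $Cn\log n$, and only for $m \ge Cn\log n$ (where the exponent $\alpha C$ is large) does it union bound over individual $m$. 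Your proposal, as written, is the naive per-$m$ argument that does not close.

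There is a second, smaller gap in your treatment of small sets: with ``atypical'' meaning degree below $(1-\eps/8)np$, the probability that a fixed vertex is atypical is $n^{-\Theta(\eps^2)}$, so the expected number of \emph{adjacent} atypical pairs is of order $n\log n\cdot n^{-\Theta(\eps^2)} \gg 1$, not $o(1)$; such pairs are plentiful and cannot be excluded by a first-moment bound. The paper's way around this is a two-tier classification: $\TINY$ vertices (degree below $\delta np$ for small $\delta$) and $\ATYP$ vertices (any $\delta$-deviation), together with the structural Lemma~3.1 asserting only that no $3$-neighbourhood contains three tiny vertices, no vertex has more than $L$ atypical neighbours, and no triangle has two tiny vertices --- statements weak enough to hold w.h.p.\ but strong enough (via Claim~3.3 and the $\Xa$ versus $\Xt$ comparison) to run the degree-sum/edge-count contradiction for small $X$. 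Your large-set analysis (Chernoff on $e(S)$ and $e(S,\bar S)$ beating $\binom{n}{|S|}$) is fine and matches the paper's case $|X| > n/1000$ via property (C1), but the small-set regime and, above all, the uniformity over $m$ are where your argument has real holes.
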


The value of $m$ is asymptotically optimal. Indeed, having $m = \frac{1 -
\eps}{6} n\log n$ is enough for the existence of {\em cherries} `attached' to
the giant, i.e.\ two vertices of degree one with a common neighbour of degree
three, which the adversary can easily disconnect by removing the edge from the
degree three vertex which is not incident to a degree one vertex
(see~\cite{luczak1991tree}). Additionally, the constant $1/2$ is optimal as we
show in Proposition~\ref{prop:optimal-constant} at the end of
Section~\ref{sec:proof}.

The previous theorem immediately implies a hitting time result for the
resilience of connectivity.

\begin{theorem}\label{thm:hitting-time-conn}
  Let $\eps > 0$ be a constant. Consider the random graph process $\{G_i\}$ and
  let $\tau_1 = \min{\{m \colon \delta(G_m) \geq 1\}}$ denote the step in which
  the last isolated vertex disappears. Then w.h.p.\ we have that $G_{\tau_1}$ is
  $(1/2 - \eps)$-resilient with respect to connectivity.
\end{theorem}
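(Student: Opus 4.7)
The plan is to derive Theorem~\ref{thm:hitting-time-conn} as a direct corollary of Theorem~\ref{thm:main-thm-conn} combined with two well-known facts about the random graph process. The first is the Bollob\'{a}s--Thomason hitting time $\tau_1 = \tau_\conn$ w.h.p., already recalled in the introduction, which guarantees that $G_{\tau_1}$ is connected and hence that its giant is the whole graph. The second is the lower tail bound $\tau_1 > \tfrac{1 + \eps/2}{6} n \log n$ w.h.p., which we need in order to land inside the window covered by Theorem~\ref{thm:main-thm-conn}.

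The only computation of substance is this lower bound on $\tau_1$. A first-moment estimate shows that for $m = c n \log n$ with $c < 1/2$, the expected number of isolated vertices in $G_{n, m}$ is
\[
  n\bigl(1 - \tfrac{2m}{n(n - 1)}\bigr)^{n - 1} = n^{1 - 2c + o(1)},
\]
which tends to infinity for any $c < 1/2$. Choosing $c = (1 + \eps/2)/6 < 1/2$ and applying a standard second-moment argument to the number of isolated vertices yields that $G_m$ has at least one isolated vertex w.h.p., i.e.\ $\tau_1 > m$ w.h.p.

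Given these two ingredients, I would invoke Theorem~\ref{thm:main-thm-conn} with parameter $\eps/2$ and take a union bound of three almost-sure events: the conclusion of Theorem~\ref{thm:main-thm-conn} for $\eps/2$, the hitting-time equality $\tau_1 = \tau_\conn$, and the lower bound $\tau_1 > \tfrac{1 + \eps/2}{6} n \log n$. On their intersection, the value $m = \tau_1$ lies in the range covered by Theorem~\ref{thm:main-thm-conn}, so the giant of $G_{\tau_1}$---which by connectivity is $G_{\tau_1}$ itself---is $(1/2 - \eps/2)$-resilient, and in particular $(1/2 - \eps)$-resilient, with respect to connectivity. No substantive obstacle arises: Theorem~\ref{thm:hitting-time-conn} is a clean corollary whose only non-trivial input is the routine tail estimate on $\tau_1$ above.
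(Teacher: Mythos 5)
Your proposal is correct and follows essentially the same route as the paper, which deduces Theorem~\ref{thm:hitting-time-conn} directly from Theorem~\ref{thm:main-thm-conn} together with the standard facts that w.h.p.\ $\tau_1 = \tau_{\conn}$ (Bollob\'as--Thomason) and that $\tau_1 = (\tfrac12 + o(1))n\log n$, so that $\tau_1$ lands in the range covered by Theorem~\ref{thm:main-thm-conn} and the giant of $G_{\tau_1}$ is the whole graph; the paper simply states this implication as immediate, while you spell out the routine tail estimate on $\tau_1$.
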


We now turn our attention to $k$-connectivity. Recall, a graph is said to be
$k$-connected if the removal of at most $k - 1$ vertices does not disconnect it.
For the random graph process $\{G_i\}$, let us define
\[
  \tau_k = \min{\{ m \colon \delta(G_m) \geq k \}} \quad \text{and} \quad
  \tau_{\kconn} = \min{\{ m \colon G_m \text{ is $k$-connected} \}},
\]
denoting the point at which $\delta(G_m) \geq k$ and at which $G_m$ becomes
$k$-connected, respectively. Erd\H{o}s and R\'{e}nyi~\cite{erdHos1964strength}
were the first to show that for $m = \tfrac{n}{2}(\log n + (k - 1)\log\log n +
c_n)$ w.h.p.\ $G_{n, m}$ is $k$-connected if $c_n \to \infty$, which also
coincides with the threshold for $G_{n, m}$ to have minimum degree $k$. This
result was later strengthened by Bollob\'{a}s and
Thomason~\cite{bollobas1985random} who proved the hitting time statement, that
is w.h.p.\ $\tau_k = \tau_{\kconn}$. Therefore, the trivial necessary condition
for $k$-connectivity---having minimum degree at least $k$---turns out to be
sufficient as well.

In order to state our second result regarding $k$-connectivity, we need a
slightly different notion of resilience. In the case of $1$-connectivity (or
simply connectivity) and $2$-connectivity, $(1/2 - o(1))$-resilience is not
enough for the adversary to make the minimum degree of the graph drop below one
and two, respectively. However, as soon as $k \geq 3$, the adversary would be
allowed to remove an edge incident to a degree three vertex, and could easily
prevent $3$-connectivity. We go around this fact by slightly restricting the
power of the adversary with the following definition.

\begin{definition}[$(\alpha, k)$-resilience]\label{def:strange-resilience}
  Let $G = (V, E)$ be a graph, $\cP$ a monotone increasing graph property,
  $\alpha \in [0, 1]$ a constant, and $k \geq 1$ an integer. We say that $G$ is
  $(\alpha, k)$-{\em resilient} with respect to $\cP$ if for every spanning
  subgraph $H \subseteq G$ such that $\deg_H(v) \leq \alpha \deg_G(v)$ and
  $\deg_{G - H}(v) \geq k$ for all $v \in V$, we have $G - H \in \cP$.
\end{definition}

As observed above, the main obstruction for not having $k$-connectivity in the
beginning of the process is the existence of vertices with degree smaller than
$k$. However, something can still be said for such sparse(r) graphs. For an
integer $k \geq 2$ and a graph $G$ we define the $k$-{\em core} of $G$ to be the
(possibly empty) graph obtained by successively removing vertices of degree less
than $k$ from $G$. With this in mind we are ready to state our second result.

\begin{restatable}{theorem}{kconntheorem}\label{thm:main-thm-k-conn}
  Let $k \geq 2$ be an integer, $\eps > 0$ a constant, and consider the random
  graph process $\{G_i\}$. Then w.h.p.\ for every $m \geq \frac{1 + \eps}{6} n
  \log n$ we have that the $k$-core of $G_m$ is $(1/2 - \eps, k)$-resilient with
  respect to $k$-connectivity.
\end{restatable}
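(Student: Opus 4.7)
The plan is to mirror the strategy for Theorem~\ref{thm:main-thm-conn}, upgrading its small-cut analysis by an extra union bound over `separators' of size at most $k - 1$. By the standard vertex-cut characterisation of $k$-connectivity, it suffices to rule out, for every admissible $H$, the existence of a partition $V_c = A \sqcup B \sqcup S$ of the $k$-core's vertex set $V_c$ with $|A|, |B| \geq 1$, $|S| \leq k - 1$, and $E_G(A, B) \subseteq H$. A pleasant feature of the $(\alpha, k)$-resilience definition is that the extra constraint $\deg_{G-H}(v) \geq k$ immediately rules out $|A| = 1$: the unique vertex $v \in A$ would need all of its $\geq k$ non-$H$ neighbours in $V_c$ to lie inside $S$, impossible since $|S| < k$.

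Next I would reuse the structural facts about $G := G_m$ at density $p = (1 + \eps)\log n / (3n)$ that already drive the $k = 1$ case: (i) $\deg_G(v) = (1 \pm o(1)) np$ for all but a sublinear set of vertices, and w.h.p.\ $V_c = \{v : \deg_G(v) \geq k\}$; (ii) edge-expansion, $e_G(A, B) = (1 + o(1))|A||B|p$ whenever $|A|, |B|$ are super-polylogarithmic; (iii) no small dense subsets, $e_G(T) \leq (1 + \eps/10)|T|$ for every $T$ with $|T| = o(n)$. For a hypothetical bad triple $(A, B, S)$ with $|A| \leq |B|$, each $v \in A$ has all of its $B$-edges inside $H$, so $\deg_H(v) \geq \deg_{V_c}(v) - |N_G(v) \cap A| - (k - 1)$; summing over $A$ and combining with the admissibility bound $\sum_{v \in A} \deg_H(v) \leq (1/2 - \eps) \sum_{v \in A} \deg_{V_c}(v)$ yields
\[
  \bigl(\tfrac{1}{2} + \eps\bigr) \sum_{v \in A} \deg_{V_c}(v) \; \leq \; 2\, e_G(A) + (k - 1)|A|.
\]

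A three-way split on $|A|$ then finishes the argument. \emph{Bulk} $|A| \geq n/\log^2 n$: (i) and (ii) make the left-hand side $(1 \pm o(1))(\tfrac{1}{2} + \eps)|A| np$ while the right-hand side is $o(|A| np)$, and the $e^{-\Omega(n)}$ concentration failure probability absorbs the $3^n \cdot n^{k-1}$ union bound over triples. \emph{Typical-degree small}, $2 \leq |A| < n/\log^2 n$ with no low-degree vertex in $A$: (iii) gives $2\, e_G(A) \leq (2 + \eps/5)|A|$, so the right-hand side is $O(|A|) = o(|A| np)$, and the contradiction is the same. \emph{Low-degree small}, where $A$ contains a vertex whose $G$-degree is close to $k$: one performs a direct first-moment enumeration, bounding the number of $(A, S)$ choices by $n^{|A| + k - 1}$ and the probability that $A$ has the required low degrees while all of $A$'s edges to $V_c \setminus (A \cup S)$ fit inside the adversarial budget by $\exp\bigl(-(1 + \Omega(\eps))|A| \cdot np\bigr)$; with $np = (1 + \eps)\log n / 3$ the sum over $|A|$ tends to $0$.

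The main obstacle is the low-degree case, where both the enumeration count and the per-configuration failure probability are polynomial in $n$, leaving essentially no slack. This is precisely the regime in which the density threshold $1/6$ and the resilience constant $1/2$ become tight, matching the cherry-like obstruction noted after Theorem~\ref{thm:main-thm-conn} and the optimality statement of Proposition~\ref{prop:optimal-constant}; the extra $n^{k-1}$ factor from choosing $S$ is comfortably absorbed by the baseline lower bound $\deg_{V_c}(v) \geq k$ enforced inside the $k$-core, so neither the threshold nor the resilience constant changes with $k$.
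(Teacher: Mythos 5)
Your reduction to ruling out triples $(A,B,S)$ with $E(A,B)\subseteq H$, and the deterministic inequality $(1/2+\eps)\sum_{v\in A}\deg_{V_c}(v)\leq 2e_G(A)+(k-1)|A|$, are fine, but the ``low-degree small'' case --- which is exactly where the density $\tfrac16 n\log n$ and the constant $\tfrac12$ are decided --- does not work as you compute it. At this density $np=(1+\eps)\log n/3$, the probability that a fixed vertex has small degree (or, more generally, places at least $\max\{k,(1/2+\eps)\deg(v)\}$ of its neighbours inside a fixed small set $A\cup S$) is of order $e^{-(1-o(1))np}=n^{-(1+\eps)/3+o(1)}$. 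So even granting your per-configuration bound $\exp\bigl(-(1+\Omega(\eps))|A|np\bigr)$, the first moment is
\[
  \sum_{a\geq 2} n^{\,a+k-1}\cdot n^{-(1/3+O(\eps))a},
\]
whose terms diverge for every fixed $a$ because the exponent is $(2/3-O(\eps))a+k-1>0$; the sum does not tend to $0$, and no slack is hiding elsewhere: a union bound over vertex subsets that contain low-degree vertices is unaffordable below the connectivity density $np\approx\log n$. Nor can you fall back on the summed inequality alone: two adjacent vertices of core-degree $k$ satisfy it, and adjacent pairs of low-degree vertices do exist w.h.p.\ at this density (this also falsifies your side claim that the $k$-core coincides with $\{v\colon\deg_G(v)\geq k\}$). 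What really excludes such sets is the structure forced vertex by vertex by $\deg_{G-H}(v)\geq k$ and $|S|\leq k-1$ together with the fact that low-degree vertices are locally spread out --- information your aggregated inequality has discarded. (A smaller slip: in the bulk case the failure probability for $|A|$ near $n/\log^2 n$ is only $e^{-\Theta(|A|np)}=e^{-\Theta(n/\log n)}$, which does not absorb $3^n$; counting $\binom{n}{|A|}$ per size does work, so this part is repairable.)

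This is where the paper takes a genuinely different route: it never union-bounds over small vertex sets at this density. Instead it proves local statements about configurations of bounded size (Lemma~\ref{lem:neighbourhood-atyp-tiny}: every vertex has at most two tiny vertices in its $3$-neighbourhood, at most $L$ atypical neighbours, and no triangle contains two tiny vertices), which are first-moment-provable precisely because the configurations have $O(1)$ size; combined with the global edge bound of Lemma~\ref{lem:gnp-num-edges}, these become deterministic properties that every small set must obey, and the small-set case is then settled deterministically (Claim~\ref{cl:x-tiny-intersection} and the $N_E$/$N_V$ counting, where $\deg_{G-H}(v)\geq k$ and $|S|\leq k-1$ enter per vertex rather than in aggregate). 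A second gap is the quantifier ``for every $m$'': at the threshold a fixed-$m$ statement holds only with probability $1-n^{-c}$ for a small $c>0$, which cannot be unioned over $\Theta(n\log n)$ values of $m$; the paper sandwiches $G^-\subseteq G_m\subseteq G^+$ for all $m$ in a batch of length $\Theta(\eps n\log n)$, defines tiny/atypical vertices with respect to $G^-$ so that the properties are inherited by every intermediate graph, and takes a union bound over constantly many batches plus the individually large $m$. Both ingredients would be needed to repair your proposal.
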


Even though the value $m \geq n\log n/6$ is optimal for connectivity, it is
conceivable that the $k$-core is resilient with respect to $k$-connectivity much
earlier in the process, that is as soon as it is $k$-connected, which happens
roughly for $m = cn$, for some constant $c$ depending on $k$, as shown by
Bollob\'{a}s~\cite{bollobas84evolutionhitting}. From a recent result of
Montgomery~\cite{montgomery2017} one deduces that the $k$-core is $(1/2 -
o(1))$-resilient with respect to $2$-connectivity, but only if the constant $k$
is much larger than two, and furthermore, no conclusions can be drawn for
$k$-connectivity. We leave this as a question for further research.

Similarly as before we immediately obtain a hitting time version of the result.

\begin{theorem}\label{thm:hitting-time-k-conn}
  Let $k \geq 2$ be an integer and $\eps > 0$ a constant. Consider the random
  graph process $\{G_i\}$ and let $\tau_k = \min{\{m \colon \delta(G_m) \geq
  k\}}$ denote the step in which the last vertex of degree at most $k - 1$
  disappears. Then w.h.p.\ we have that $G_{\tau_k}$ is $(1/2 - \eps,
  k)$-resilient with respect to $k$-connectivity.
\end{theorem}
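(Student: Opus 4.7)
The plan is to deduce Theorem~\ref{thm:hitting-time-k-conn} as a direct corollary of Theorem~\ref{thm:main-thm-k-conn}. Two facts need to be combined. First, by the classical results of Erd\H{o}s--R\'{e}nyi~\cite{erdHos1964strength} and Bollob\'{a}s--Thomason~\cite{bollobas1985random}, w.h.p.\ $\tau_k = (1/2 + o(1))\, n \log n$ (the sharper form being $\tau_k = \tfrac{n}{2}(\log n + (k-1)\log\log n + c_n)$ with a bounded $c_n$), which in particular satisfies $\tau_k \geq \frac{1 + \eps}{6} n \log n$ with probability tending to $1$. Second, by the very definition of $\tau_k$ as the hitting time of the event $\delta(G_m) \geq k$, the graph $G_{\tau_k}$ has minimum degree at least $k$, so the iterative stripping procedure that defines the $k$-core removes no vertex at all; consequently the $k$-core of $G_{\tau_k}$ equals $G_{\tau_k}$ itself.

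Given these two observations, I would condition on the (w.h.p.) event that both the conclusion of Theorem~\ref{thm:main-thm-k-conn} holds for the entire range $m \geq \frac{1+\eps}{6} n \log n$ and that $\tau_k$ falls within this range. Applying Theorem~\ref{thm:main-thm-k-conn} with $m = \tau_k$ then yields that the $k$-core of $G_{\tau_k}$, which is all of $G_{\tau_k}$, is $(1/2 - \eps, k)$-resilient with respect to $k$-connectivity, which is precisely the desired conclusion.

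There is essentially no obstacle in this deduction, as the work has already been done in Theorem~\ref{thm:main-thm-k-conn}; the only subtle point to flag is that Theorem~\ref{thm:main-thm-k-conn} holds simultaneously for all $m \geq \frac{1+\eps}{6} n \log n$, so one is permitted to substitute the random value $\tau_k$ into it. Intersecting the two high-probability events and invoking a union bound completes the argument.
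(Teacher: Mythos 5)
Your deduction is correct and matches the paper's own treatment: the paper likewise obtains Theorem~\ref{thm:hitting-time-k-conn} as an immediate consequence of Theorem~\ref{thm:main-thm-k-conn}, using that w.h.p.\ $\tau_k \geq \frac{1+\eps}{6} n\log n$ and that $\delta(G_{\tau_k}) \geq k$ forces the $k$-core of $G_{\tau_k}$ to be $G_{\tau_k}$ itself. Your remark that the conclusion of Theorem~\ref{thm:main-thm-k-conn} holds simultaneously for all $m$ in range (so the random time $\tau_k$ may be substituted) is exactly the point that makes the corollary legitimate.
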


As a direct corollary we also get a statement about the classic notion of $(1/2
- o(1))$-resilience for $k$-connectivity.

\begin{corollary}
  Let $k \geq 2$ be an integer and $\eps > 0$ a constant. Consider the random
  graph process $\{G_i\}$ and let $\tau_{2k - 2} = \min{\{ m \colon \delta(G_m)
  \geq 2k - 2 \}}$ denote the step in which the last vertex of degree at most
  $2k - 3$ disappears. Then w.h.p.\ we have that $G_{\tau_{2k - 2}}$ is $(1/2 -
  \eps)$-resilient with respect to $k$-connectivity.
\end{corollary}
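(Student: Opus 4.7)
The plan is to reduce the statement to Theorem~\ref{thm:main-thm-k-conn}, exploiting the slack between the threshold $2k-2$ demanded by $\tau_{2k-2}$ and the threshold $k$ appearing in Definition~\ref{def:strange-resilience}: once the minimum degree is at least $2k-2$, any $(1/2-\eps)$-fraction of the edges at a vertex is too small to drag its degree below $k$, so the auxiliary condition $\deg_{G-H}(v) \geq k$ becomes automatic.

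First I would check that w.h.p.\ $\tau_{2k-2}$ is large enough for Theorem~\ref{thm:main-thm-k-conn} to apply. By the Erd\H{o}s--R\'{e}nyi--Bollob\'{a}s--Thomason results already recalled in the introduction, w.h.p.\ $\tau_{2k-2} = \frac{n}{2}(\log n + (2k-3)\log\log n + O(1)) = (1/2 + o(1))\,n \log n$, which in particular exceeds $\frac{1+\eps}{6}\,n\log n$. Applying Theorem~\ref{thm:main-thm-k-conn} at $m = \tau_{2k-2}$ then yields that the $k$-core of $G_{\tau_{2k-2}}$ is $(1/2-\eps, k)$-resilient with respect to $k$-connectivity. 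Since $\delta(G_{\tau_{2k-2}}) \geq 2k-2 \geq k$, the peeling procedure defining the $k$-core removes no vertices, so this $k$-core is $G_{\tau_{2k-2}}$ itself.

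It remains to upgrade $(1/2-\eps, k)$-resilience to the classical $(1/2-\eps)$-resilience. Let $H \subseteq G_{\tau_{2k-2}}$ be any spanning subgraph with $\deg_H(v) \leq (1/2-\eps)\deg_G(v)$ for every $v$. Writing $G = G_{\tau_{2k-2}}$ for brevity, we obtain
\[
  \deg_{G - H}(v) \;\geq\; (1/2+\eps)\deg_G(v) \;\geq\; (1/2+\eps)(2k-2) \;=\; (k-1)(1 + 2\eps) \;>\; k - 1,
\]
and integrality of $\deg_{G-H}(v)$ forces $\deg_{G-H}(v) \geq k$. Hence $H$ satisfies both conditions of Definition~\ref{def:strange-resilience}, and the $(1/2-\eps, k)$-resilience of $G$ gives that $G - H$ is $k$-connected, as required.

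In short, the whole argument is structural: the heavy lifting is done by Theorem~\ref{thm:main-thm-k-conn}, and the only thing to verify is the elementary arithmetic relating $2k-2$ and $k$ together with the asymptotic size of $\tau_{2k-2}$. I do not anticipate a significant obstacle; the only delicate point is making sure the integrality argument is invoked so that the strict inequality $>k-1$ actually rounds up to $\geq k$.
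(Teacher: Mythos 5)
Your proof is correct and is exactly the intended derivation: the paper states this as a direct corollary, and the route you take---apply Theorem~\ref{thm:main-thm-k-conn} at $m = \tau_{2k-2} = (1/2+o(1))n\log n$, note the $k$-core is the whole graph, and use integrality to get $\deg_{G-H}(v) > k-1$ hence $\geq k$ so that $(1/2-\eps,k)$-resilience upgrades to $(1/2-\eps)$-resilience---is the same argument the authors have in mind.
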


%!TEX root = connectivity_resilience.tex
\section{Preliminaries}\label{sec:preliminaries}

Our graph theoretic notation follows standard textbooks (see,
e.g.~\cite{bondy2008graphtheory}). In particular, given a graph $G$ we denote by
$V(G)$ and $E(G)$ the set of its vertices and edges, respectively, and by $v(G)$
and $e(G)$ their sizes. For subsets of vertices $X, Y \subseteq V(G)$, $G[X]$
stands for the subgraph of $G$ induced by $X$, $G[X, Y]$ for the bipartite
subgraph with bipartition $(X, Y)$, and $E_G(X, Y)$ denotes the set of edges
between $X$ and $Y$ in $G$, i.e.\ $E_G(X, Y) := \{\{v, w\} \colon v \in X, w \in
Y,\ \text{and}\ \{v, w\} \in E(G)\}$ and $e_G(X, Y)$ denotes its size. For
short, we write $E_G(X) := E_G(X, X)$ and $e_G(X)$ for its size. Furthermore, we
write $N_G(X) := \{v \in V(G) \colon \exists u \in X\ \text{such that}\ \{u, v\}
\in E(G)\}$ for the neighbourhood of $X$ in $G$. Given a vertex $v \in V(G)$ we
abbreviate $N_G(\{v\})$ to $N_G(v)$ and let $\deg_G(v)$ be the size of its
neighbourhood, that is the degree of $v$ in $G$. We use $\delta(G)$ for the
minimum degree of $G$. For $\ell \in \N$ and a vertex $v \in V(G)$, we define
the $\ell$-neighbourhood of $v$ as the set of all vertices which lie at distance
at most $\ell$ from $v$, and write $N_{G}^{\ell}(v)$, excluding $v$ itself. We
omit the subscript $G$ whenever it is clear from the context which graph we
refer to.

For $x, y, \eps \in \R$, we write $x \in (1 \pm \eps)y$ to denote $(1 - \eps)y
\leq x \leq (1 + \eps)y$. Throughout, we use the natural logarithm $\log x =
\log_e x$. Ceilings and floors are omitted whenever they are not essential. We
make use of the standard asymptotic notation $o, \omega, O$, and $\Omega$.
Lastly, we use subscripts with constants such as $C_{3.1}$ to indicate that
$C_{3.1}$ is a constant with the properties as in the statement of
Claim/Lemma/Proposition/Theorem 3.1.

We make use of the standard estimate for deviation of a binomially distributed
random variable $\Bin(n, p)$ with parameters $n$ and $p$ from its mean (see,
e.g.~\cite{frieze2015introduction}).

\begin{lemma}[Chernoff's inequality]\label{lem:chernoff}
  Let $X \sim \Bin(n, p)$ and let $\mu := \E[X]$. Then for all $\delta \in (0,
  1)$:
  \begin{itemize}
    \item $\Pr[X \geq (1 + \delta) \mu] \leq e^{-\frac{\delta^2 \mu}{3}}$, and
    \item $\Pr[X \leq (1 - \delta) \mu] \leq e^{-\frac{\delta^2 \mu}{2}}$.
  \end{itemize}
\end{lemma}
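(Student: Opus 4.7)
The plan is to apply the standard Cram\'er--Chernoff exponential-moment method. Writing $X = \sum_{i=1}^{n} X_i$ as a sum of independent $\mathrm{Bernoulli}(p)$ variables, for the upper tail I would fix a parameter $t > 0$ and apply Markov's inequality to the non-negative random variable $e^{tX}$:
\[
  \Pr[X \geq (1+\delta)\mu] = \Pr\bigl[e^{tX} \geq e^{t(1+\delta)\mu}\bigr] \leq \frac{\E[e^{tX}]}{e^{t(1+\delta)\mu}}.
\]
By independence of the $X_i$'s, $\E[e^{tX}] = \prod_i \E[e^{tX_i}] = (1 - p + p e^t)^n$, and the elementary bound $1 + x \leq e^x$ upgrades this to $\E[e^{tX}] \leq \exp\bigl(\mu(e^t - 1)\bigr)$.

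Next I would minimise the resulting bound over $t > 0$. Setting the derivative of $\mu(e^t - 1) - t(1+\delta)\mu$ to zero yields the optimiser $t = \log(1+\delta)$, which is positive since $\delta > 0$, and substituting back produces
\[
  \Pr[X \geq (1+\delta)\mu] \leq \exp\bigl(\mu\bigl[\delta - (1+\delta)\log(1+\delta)\bigr]\bigr).
\]
What remains is the single-variable inequality $\delta - (1+\delta)\log(1+\delta) \leq -\delta^2/3$ on $(0,1)$. This I would verify by defining $f(\delta) := (1+\delta)\log(1+\delta) - \delta - \delta^2/3$, noting $f(0) = 0$, and checking that $f'(\delta) = \log(1+\delta) - 2\delta/3 \geq 0$ on $[0,1]$ by one further differentiation. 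This Taylor-type estimate is the only mildly delicate part of the argument, and it is the place where the specific constant $1/3$ enters.

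For the lower tail I would mirror the argument with $t < 0$; the choice $t = \log(1-\delta) < 0$ minimises the analogous expression and yields
\[
  \Pr[X \leq (1-\delta)\mu] \leq \exp\bigl(\mu\bigl[-\delta - (1-\delta)\log(1-\delta)\bigr]\bigr).
\]
The required inequality $-\delta - (1-\delta)\log(1-\delta) \leq -\delta^2/2$ on $(0,1)$ is in fact cleaner: setting $g(\delta) := (1-\delta)\log(1-\delta) + \delta - \delta^2/2$ gives $g(0) = g'(0) = 0$ together with $g''(\delta) = \delta/(1-\delta) \geq 0$, so $g \geq 0$ throughout and the bound follows. The reason the lower-tail constant $1/2$ beats the upper-tail $1/3$ is visible in these second derivatives: the $(1+\delta)$ factor in $f$ costs an extra $1+\delta$ in the denominator that is absent in $g$.
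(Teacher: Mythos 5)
Your proposal is correct: the paper does not prove Lemma~\ref{lem:chernoff} at all, citing it as a standard estimate from the textbook of Frieze and Karo\'nski, and your Cram\'er--Chernoff derivation (Markov applied to $e^{tX}$, the bound $\E[e^{tX}]\leq e^{\mu(e^t-1)}$, optimisation at $t=\log(1\pm\delta)$, then the elementary inequalities for the exponents) is exactly the standard argument behind it. One small gloss: in the upper-tail check, $f''(\delta)=\tfrac{1}{1+\delta}-\tfrac23$ changes sign at $\delta=\tfrac12$, so ``one further differentiation'' does not show $f'$ is monotone; you also need the endpoint value $f'(1)=\log 2-\tfrac23>0$ to conclude $f'\geq 0$ on $[0,1]$, after which the argument goes through.
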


Although our main results concern the random graph process, in the proof we
heavily rely on the properties of the binomial random graph $\Gnp$. Next is a
bound on the number of edges certain subsets can have in a random graph
$\Gnp$ (see~\cite{krivelevich2006pseudo}*{Corollary~2.3}).

\begin{lemma}\label{lem:gnp-num-edges}
  Let $p = p(n) \leq 0.99$. Then w.h.p.\ $G \sim \Gnp$ is such that every set $X
  \subseteq V$ satisfies
  \[
    \binom{|X|}{2}p - c|X| \sqrt{np} \leq e(X) \leq \binom{|X|}{2}p + c|X|
    \sqrt{np},
  \]
  for some absolute constant $c > 0$.
\end{lemma}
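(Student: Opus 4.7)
The plan is a standard application of Chernoff's inequality combined with a union bound. I would fix the size $k = |X|$, control the probability that a single $X \in \binom{V}{k}$ violates the claimed two-sided inequality, and then sum $\binom{n}{k}$ copies of the per-set tail bound over $k \in \{1, \ldots, n\}$.

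For a fixed $X$ of size $k$, the edge count $e(X)$ is distributed as $\Bin(\binom{k}{2}, p)$ with mean $\mu_k := \binom{k}{2}p$; write $t_k := ck\sqrt{np}$ for the target deviation. Lemma~\ref{lem:chernoff} then yields a two-sided tail bound of the shape
\[
  \Pr\bigl[\,|e(X) - \mu_k| \geq t_k\,\bigr] \leq 2\exp\bigl(-\Omega\bigl(\min\{t_k^2/\mu_k,\ t_k\}\bigr)\bigr).
\]
A short direct computation shows $t_k^2/\mu_k = \Theta(c^2 n)$ uniformly in $k \geq 2$. Hence, whenever $t_k \leq \mu_k$ (equivalently $k = \Omega(\sqrt{n/p})$), the standard multiplicative form of Chernoff applies and gives per-set failure probability $\exp(-\Omega(c^2 n))$. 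In the complementary small-$k$ regime where $t_k > \mu_k$, the large-deviation form of the upper tail gives $\exp(-\Omega(ck\sqrt{np}))$ instead, and the lower-tail bound is vacuous since $e(X) \geq 0 \geq \mu_k - t_k$.

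To close the argument, I would union-bound using $\binom{n}{k} \leq \exp(k\log(en/k))$ and sum over $k$. In the large-$k$ regime, $k\log(en/k) \leq n$ is easily dominated by $c^2 n$ as soon as $c$ is a sufficiently large absolute constant, so those terms sum to $o(1)$. In the small-$k$ regime the exponent $k\log(en/k)$ is dominated by $ck\sqrt{np}$ after choosing $c$ large; the sum is again geometric in $k$ and yields $o(1)$ overall.

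The main obstacle, and essentially the only non-routine point, is the case split at $k \asymp \sqrt{n/p}$: for small $k$ the target deviation $t_k$ exceeds the mean $\mu_k$, so the ``multiplicative'' form of Chernoff cannot be applied directly and one must invoke the large-deviation (additive) form of the upper tail. Once this split is handled and the absolute constant $c$ is chosen large enough to beat $k\log(en/k)$ in both regimes, the remainder of the proof is a mechanical union-bound computation.
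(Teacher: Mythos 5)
The paper never proves this lemma itself --- it is imported verbatim from the Krivelevich--Sudakov survey (their Corollary~2.3) --- so your argument has to stand on its own, and as written it has a genuine gap in the small-$k$ regime. There your per-set bound is $\exp(-\Omega(t_k))=\exp(-\Omega(ck\sqrt{np}))$, and you claim this beats the entropy factor $\binom{n}{k}\le\exp(k\log(en/k))$ once $c$ is a large \emph{absolute} constant. That would require $c\sqrt{np}\gtrsim\log(en/k)\approx\log n$, i.e.\ $np\gtrsim(\log n)^2/c^2$. But the lemma is used in this paper with $p_1=\Theta(\log n/n)$, so $\sqrt{np}=\Theta(\sqrt{\log n})=o(\log n)$, and no fixed $c$ saves the computation: for instance, for $k\asymp c\sqrt{np}$ (the smallest size at which a violation is even possible, since for $k<2c\sqrt{np}$ one has $t_k>\binom{k}{2}$) your per-set bound is only $\exp(-\Theta(c^2\log n))$ while there are $\exp(\Theta(\log^{3/2}n))$ sets of that size. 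Note also that for completely arbitrary $p\le 0.99$ the statement is in fact false (for $p=n^{-3/2}$ a single edge, which exists w.h.p., already violates the upper bound with $c|X|\sqrt{np}=o(1)$), so any correct proof must use a lower bound on $np$ somewhere; your sketch never does, and the union bound in the small-$k$ regime is exactly where this omission surfaces.

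The repair is to replace the additive tail $\exp(-t_k/3)$ by the full binomial (Poisson-type) upper tail, or simply a first-moment count: for $t_k=ck\sqrt{np}>\mu_k$,
\[
  \Pr[e(X)\ge t_k]\;\le\;\binom{\binom{k}{2}}{t_k}p^{t_k}\;\le\;\Big(\frac{ek^2p}{2t_k}\Big)^{t_k}
  \;=\;\exp\Big(-t_k\log\tfrac{2t_k}{ek^2p}\Big),
  \qquad
  \log\tfrac{2t_k}{ek^2p}=\log\tfrac{2c}{e}+\log\tfrac{n}{k\sqrt{np}} .
\]
The extra logarithmic factor is precisely what the Bernstein-type bound discards: in the relevant range $2c\sqrt{np}\le k\le O\big(\sqrt{n/p}\big)$ one has $\log\frac{n}{k\sqrt{np}}\ge 0$, and a short computation shows that, say for $np\ge 1$ and a suitable absolute $c$, $ck\sqrt{np}\,\big(\log\tfrac{2c}{e}+\log\tfrac{n}{k\sqrt{np}}\big)\ge 2k\log(en/k)$, so the union bound closes (and $np\ge 1$ is harmless, being far weaker than what the paper needs). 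Your treatment of the large-$k$ regime, where $t_k\le\mu_k$ and $t_k^2/\mu_k=\Theta(c^2n)$ beats $\binom{n}{k}\le e^n$, is correct as written; only the small-$k$ tail estimate needs to be strengthened as above.
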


The biggest obstacle in working with (very) sparse random graphs, that is for
values of $p$ close to the threshold of connectivity, is that one cannot
guarantee that w.h.p.\ even the degree of every vertex is concentrated around
its expectation. Hence, the vertices whose degrees deviate significantly from
the average require some special care. We introduce two classes of such vertices
in the following definition.

\begin{definition}
  For $\delta, p \in [0, 1]$ and a graph $G$ with $n$ vertices, we define
  \begin{align*}
    \TINY_{p, \delta}(G) &= \{v \in V(G) \colon \deg_G(v) < \delta np\}, \\
    \ATYP_{p, \delta}(G) &= \{v \in V(G) \colon \deg_G(v) \notin (1 \pm
    \delta)np\}.
  \end{align*}
  We refer to the vertices in $\TINY_{p, \delta}(G)$ as {\em tiny} and to the
  vertices in $\ATYP_{p, \delta}(G)$ as {\em atypical}.
\end{definition}

Note that if $p$ in the above definition is roughly such that $np$ is the
average degree of a vertex in $G$, then one can think of $\TINY$ as a set of
vertices whose degree is much smaller than the average, and $\ATYP$ as a set of
vertices whose degree is even slightly away from the average. Let us point out
that if $p \geq (1 + \eps)\log n/n$ and $\delta$ is small enough (depending on
$\eps$), then w.h.p.\ $G \sim \Gnp$ contains no tiny vertices, and similarly if
$p \geq C\log n/n$ for large enough $C$ (depending on $\delta$), then w.h.p.\ $G
\sim \Gnp$ contains no atypical vertices.

As their name indicates, the atypical vertices are `rare' and do not occupy a
significant fraction of the graph.

\begin{lemma}\label{lem:atypical-is-small}
  For every $\delta > 0$, if $p \geq \log n/(3n)$, then $G \sim \Gnp$ w.h.p.\
  satisfies:
  \[
    |\ATYP_{p, \delta}(G)| \leq n/\log n.
  \]
\end{lemma}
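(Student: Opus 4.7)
The plan is a straightforward first-moment argument: show that every individual vertex is atypical with probability only polynomially small in $n$, and then invoke Markov's inequality on the sum of the resulting indicators. Concretely, for a fixed vertex $v$, the degree $\deg_G(v)$ is $\mathrm{Bin}(n-1, p)$-distributed with mean $(n-1)p$, differing from $np$ only by $p = o(1)$. Hence the event $v \in \ATYP_{p,\delta}(G)$ is contained in the event $|\deg_G(v) - (n-1)p| \geq (\delta/2)(n-1)p$ for all sufficiently large $n$. Applying both tails of Chernoff's inequality (Lemma~\ref{lem:chernoff}) with parameter $\delta/2$ and using the assumption $(n-1)p \geq (1-o(1))\log n/3$, I would obtain
\[
  \Pr[v \in \ATYP_{p,\delta}(G)] \leq 2 \exp\bigl(-\tfrac{(\delta/2)^2}{3}(n-1)p\bigr) \leq 2 n^{-c(\delta)},
\]
for some positive constant $c(\delta) > 0$ depending only on $\delta$ (e.g.\ $c(\delta) = \delta^2/40$ works for all sufficiently large $n$).

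Summing over all $n$ vertices and using linearity of expectation yields $\E[|\ATYP_{p,\delta}(G)|] \leq 2n^{1-c(\delta)}$. Markov's inequality then gives
\[
  \Pr\bigl[|\ATYP_{p,\delta}(G)| \geq n/\log n\bigr] \leq 2 n^{-c(\delta)} \log n = o(1),
\]
since $c(\delta)$ is a fixed positive constant and $n^{c(\delta)}$ dominates any polylogarithmic factor as $n \to \infty$.

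I do not anticipate any genuine obstacle. The only mild technical nuisance is absorbing the discrepancy between $np$ and $(n-1)p$ into the Chernoff parameter, which is handled for free by passing from $\delta$ to $\delta/2$ (any constant factor $<1$ would do). Notably, no finer tool (second-moment method, Janson's inequality, etc.) is required here: the per-vertex atypicality probability is already polynomially small in $n$, while the target threshold $n/\log n$ is only a $\log n$ factor below the trivial bound $n$, so a plain first-moment bound beats the required estimate by a polynomial margin.
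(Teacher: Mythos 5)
Your proposal is correct and follows essentially the same route as the paper: a per-vertex Chernoff bound showing the atypicality probability is $n^{-c(\delta)}$, followed by linearity of expectation and Markov's inequality against the threshold $n/\log n$. The only difference is your explicit handling of the $(n-1)p$ versus $np$ discrepancy via the parameter $\delta/2$, a detail the paper's proof silently elides.
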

\begin{proof}
  For a fixed vertex $v \in V(G)$ by Chernoff's inequality we have
  \[
    \Pr[\deg_G(v) \notin (1 \pm \delta) np] \leq 2 e^{-\delta^2 np/3} \leq
    n^{-\gamma},
  \]
  for some $\gamma > 0$. Therefore, the expected size of the set $\ATYP_{p,
  \delta}(G)$ is $n^{1 - \gamma}$ and Markov's inequality shows that w.h.p.\
  $|\ATYP_{p, \delta}(G)| \leq n / \log n$.
\end{proof}

We briefly discuss the fact that with the previous definition at hand we could
use a variant of \cite{nenadov2018resilience}*{Definition 2.4} in order to show
marginally stronger statements than the ones from
Theorem~\ref{thm:hitting-time-conn} and Theorem~\ref{thm:hitting-time-k-conn}.

\begin{definition}{\cite{nenadov2018resilience}*{Definition 2.4}}
  Given a graph $G = (V, E)$, a graph property $\cP$, constants $\alpha,
  \delta_t, \delta_a \in [0, 1]$, and integers $K_t, K_a \in \N$, we say that
  $G$ is $(\alpha, \delta_t, K_t, \delta_a, K_a)$-{\em resilient} with respect
  to $\cP$ if for every spanning subgraph $H \subseteq G$ such that
  \[
    \deg_H(v) \leq
      \begin{cases}
        \deg_G(v) - K_t, & \text{if } v \in \TINY_{p, \delta_t}(G), \\
        \deg_G(v) - K_a, & \text{if } v \in \ATYP_{p, \delta_a}(G) \setminus
        \TINY_{p, \delta_t}(G), \\
        \alpha\deg_G(v), & \text{otherwise},
     \end{cases}
  \]
  for every $v \in V$, where $p = |E| / \binom{|V|}{2}$, we have $G - H \in
  \cP$.
\end{definition}

However, just using $(1/2 - o(1), \delta_t, 1, \delta_a, K_a)$-resilience for
connectivity would not suffice for the following reason: for $m \leq (1/4 -
\eps)n\log n$ w.h.p.\ there exists a path of length two `attached' to the giant
of $G_m$, which then an adversary could disconnect by removing the edge that
`attaches' it to the giant. Note that the removal of such an edge is not
possible under the definition of $(1/2 - o(1))$-resilience and hence we would
need a slight modification in the definition above. Lastly, $(1/2 - o(1),
\delta_t, k, \delta_a, K_a)$-resilience would suffice for $k$-connectivity for
every $k \geq 2$. In conclusion, we believe that such a `complication' would
greatly reduce the readability of our paper, unnecessarily distract the reader
from the main point, and is as such not worth pursuing.

%!TEX root = connectivity_resilience.tex
\section{The proof}\label{sec:proof}

Recall that we are trying to prove that the random graph process $\{G_i\}$ is
typically such that starting from $m \geq \frac{1 + \eps}{6} n \log n$ the giant
of $G_m$ is resilient with respect to connectivity. Before diving into the
details we give a brief outline of the ideas used.

We follow the exact path paved by the proof of
\cite{nenadov2018resilience}*{Proposition 3.1}. The most natural thing one could
try is to show that for a fixed $m \geq n \log n/6$ the giant of $G_m$ is
w.h.p.\ resilient with respect to connectivity and apply a union bound over all
such $m$. Unfortunately, this approach would fail as this probability is roughly
$1 - e^{- \alpha \cdot 2m/n}$, for a small constant $\alpha > 0$---clearly not
enough for a union bound over all values of $m$ of order $n \log n$. Instead, we
group graphs into batches of size $\eps n \log n$ and show that w.h.p.\ all
graphs in a batch satisfy the property {\em simultaneously}. This allows us to
apply a union bound over only a constant number of batches in order to cover all
values of $m$ up to $C n \log n$.  Choosing $C$ to be large enough, we may then
cover the remaining values of $m$ individually, as now for each fixed one the
statement holds with probability at least $1 - o(n^{-3})$.

For a cleaner exposition, we generate the graph $G_m$ with the help of a
binomial random graph $\Gnp$, instead of doing it from scratch. Namely, we
sample the graphs $G^{-} \sim G_{n, p_0}$ and $G_{n, p'}$ where the values of
$p_0$ and $p'$ are such that $G^{-}$ w.h.p.\ has at most some $m_0$ edges and
$G^{+} = G^{-} \cup G_{n, p'}$ has at least $(1 + \eps/6)m_0$ edges. Taking now
a permutation $\pi$ of the edges of $G_{n, p'}$ uniformly at random we may
generate each $G_m$ as a union of $G^{-}$ and the first $m - e(G^{-})$ edges
given by $\pi$.

Crucially, the properties of the graphs $G^-$ and $G^+$ that we make use of are
such that all graphs `squeezed' in between them also satisfy them. The most
important one concerns tiny and atypical vertices and states that not many of
them can be `clumped' together. Actually, an even stronger statement is true:
adding an additional $\eps$-fraction of random edges to $G^-$ in order to obtain
$G^+$ does not make tiny and atypical vertices of $G^-$ more clumped. The next
lemma, which is a special case of \cite{nenadov2018resilience}*{Lemma~2.6},
captures this precisely.

\begin{lemma}\label{lem:neighbourhood-atyp-tiny}
  For every $\eps > 0$ there exist positive constants $\delta(\eps)$ and
  $L(\eps)$ such that if $p_0 \geq (1 + \eps)\log n/(3n)$ and $p' \leq \eps p_0$
  then w.h.p.\ the following holds. Let $G^- \sim G_{n, p_0}$ and set $G^+ = G^-
  \cup G_{n, p'}$ and $p_1 = 1 - (1 - p_0)(1 - p')$. Then:
  \begin{enumerate}[(i)]
    \item\label{neighbourhood-tiny} for every $v \in V(G^+)$ we have
      $|N_{G^+}^3(v) \cap \TINY_{p_0, \delta}(G^-)| \leq 2$,
    \item\label{neighbourhood-atyp} for every $v \in V(G^+)$ we have
      $|N_{G^+}(v) \cap \ATYP_{p_0, \delta}(G^-)| \leq L$.
    \item\label{cycle-tiny} for every cycle $C \subseteq G^+$ with $v(C) = 3$,
      we have $|V(C) \cap \TINY_{p_0, \delta}(G^-)| \leq 1$.
  \end{enumerate}
\end{lemma}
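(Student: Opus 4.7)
The plan is to prove all three parts by a first-moment argument over witness configurations, unified by a single decoupling trick. In every case, a witness lives on a vertex set $W \subseteq V$ of constant size: for (i) take $W = \{v, u_1, u_2, u_3\} \cup \bigcup_i V(P_i)$ where $P_i$ is a $v$--$u_i$ path in $G^+$ of length at most $3$, so $|W| \leq 10$; for (ii) take $W = \{v, u_1, \ldots, u_{L+1}\}$; for (iii) take $W$ to be the triangle itself. The ``subgraph part'' of the witness (union of short paths, star, or triangle) lives entirely on pairs inside $W$. The ``degree part'' requires some $u_i \in W$ to be tiny or atypical in $G^-$; since $|W|$ is bounded by a constant, each such requirement is implied by the analogous condition on the \emph{external degree} $|N_{G^-}(u_i) \setminus W|$, after absorbing an additive $|W|$ into $\delta$.

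With this reformulation the decoupling is immediate. Writing $G^+ = G^- \cup G''$ with $G'' \sim G_{n, p'}$ independent of $G^-$, the external-degree event for $u_i$ depends only on the $G^-$-coins for pairs in $\{u_i\} \times (V \setminus W)$, whereas the witness-subgraph event depends only on the $G^-$- and $G''$-coins for pairs inside $W$. These are disjoint families of pair-coins, so the subgraph event is independent of the joint external-degree event, and the external-degree events of distinct $u_i$ are mutually independent. Chernoff's lower-tail inequality then yields $\Pr[u \text{ externally tiny}] \leq n^{-(1+\eps)/3 + o_\delta(1)}$ and $\Pr[u \text{ externally atypical}] \leq n^{-c(\delta)}$ for some $c(\delta) > 0$.

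It remains to sum the joint probabilities over all witnesses. For (i), every union of three short paths through $v$ is connected, hence $e(H) \geq |V(H)| - 1$, and the expected count is at most
\[
  n \cdot n^{-(1+\eps) + o_\delta(1)} \sum_{k=4}^{10} (n p_1)^{k-1} = n^{-\eps + o_\delta(1)} \cdot O(\log^9 n).
\]
For (iii) the analogous bound is $n^3 \cdot p_1^3 \cdot n^{-2(1+\eps)/3 + o_\delta(1)} = n^{-2\eps/3 + o_\delta(1)} \cdot O(\log^3 n)$, and for (ii) it is $n^{L+2} \cdot p_1^{L+1} \cdot n^{-c(\delta)(L+1)} = n^{1 - c(\delta)(L+1)} \cdot O(\log^{L+1} n)$. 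Taking $\delta = \delta(\eps) > 0$ small enough that the $o_\delta(1)$ corrections are below $\eps/2$, and then $L = L(\eps)$ with $c(\delta)(L+1) > 1$, each expectation is $o(1)$, and Markov's inequality concludes. The one nonroutine step is the decoupling: verifying that tininess/atypicality and the short-range structure in $G^+$ actually split across disjoint underlying pair-coins. Once that is in place, the rest is a standard Chernoff-plus-first-moment calculation.
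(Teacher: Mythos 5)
The paper gives no proof of this lemma at all: it is quoted as a special case of Lemma~2.6 of~\cite{nenadov2018resilience}. So your proposal is not an alternative to an in-paper argument but a self-contained substitute, and its skeleton is sound. In particular the step you flag as nonroutine, the decoupling, is correct as you set it up: tininess (resp.\ atypicality) of $u\in W$ in $G^-$ implies the analogous condition on the external degree $|N_{G^-}(u)\setminus W|$ with $\delta$ adjusted by the additive constant $|W|$ (harmless since $np_0\to\infty$); these external-degree events are determined by the $G^-$-indicators of the pairs $\{u\}\times(V\setminus W)$, which are pairwise disjoint families for distinct $u\in W$ and disjoint from the pairs inside $W$ that govern the witness subgraph in $G^+=G^-\cup G_{n,p'}$. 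Hence the mutual independence you use is genuine, and the enumeration of witnesses (trees on at most $10$ vertices through $v$, stars $K_{1,L+1}$, triangles) covers the three bad events.

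Two points need repair or at least explicit justification. First, the bound $\Pr[u\text{ externally tiny}]\le n^{-(1+\eps)/3+o_\delta(1)}$ is true but does \emph{not} follow from the multiplicative Chernoff bound as stated in Lemma~\ref{lem:chernoff}: applying that bound at threshold $\delta np_0$ gives only $e^{-(1-\delta)^2np_0/2}\le n^{-(1+\eps)/6+o_\delta(1)}$, and with exponent $(1+\eps)/6$ the first-moment computation for part~(i) fails precisely at the bottom of the admissible range $np_0=(1+\eps)\log n/3$, where you need $\Pr[\text{tiny}]^3\cdot n^{1+o(1)}=o(1)$. You must invoke the sharper lower-tail estimate, e.g.\ $\Pr[\Bin(N,p)\le a]\le e^{-Np}\left(eNp/a\right)^a$, which at $a=\delta Np$ gives $e^{-Np(1-\delta\ln(e/\delta)-o(1))}$ and hence your claimed exponent. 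Since this is exactly where the constant $1/3$ in the hypothesis on $p_0$ is consumed, the proof should state which form of the tail bound it uses.

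Second, your displayed expectations tacitly assume $np_1=O(\log n)$, but the lemma imposes no upper bound on $p_0$ (and it is applied in the paper with $p_0$ ranging up to constant order). As literally written, the bound for (ii), $n^{L+2}p_1^{L+1}n^{-c(\delta)(L+1)}$, is not $o(1)$ when $p_1$ is of constant order. The fix is routine: keep the tail probabilities in exponential form, $\Pr[\text{externally tiny}]\le e^{-(1-o_\delta(1))np_0}$ and $\Pr[\text{externally atypical}]\le 2e^{-\Omega(\delta^2np_0)}$, so that each expectation is bounded by an expression of the shape $n\,(2np_0)^{O_L(1)}e^{-c'np_0}$ whose logarithm is decreasing in $np_0$ throughout the admissible range (for $n$ large); the worst case is then $np_0=(1+\eps)\log n/3$, which is the calculation you carried out. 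Adding this monotonicity remark (or splitting into the regimes $np_0\le\log^2n$ and $np_0>\log^2n$) closes the gap, after which your choice of $\delta(\eps)$ and then $L(\eps)$ completes the proof as you describe.
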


Lastly, in order for the whole strategy to work in the resilience setting we
keep only the edges of $G_m$ existing in $G^-$, but allow the adversary to
remove edges with respect to the degrees in $G_m$.

\begin{proposition}\label{prop:graph-process}
  Let $k \geq 1$ be an integer. For every $\eps > 0$ and integer $m_0 \geq
  \frac{1 + \eps}{6} n\log n$ the random graph process $\{G_i\}$ w.h.p.\ has the
  following property: for every integer $m_0 \leq m \leq (1 + \eps/6) m_0$ the
  giant of $G_m$ is $(1/2 - \eps)$-resilient with respect to connectivity.
  Furthermore, if $k \geq 2$, the $k$-core of $G_m$ is $(1/2 - \eps,
  k)$-resilient with respect to $k$-connectivity.
\end{proposition}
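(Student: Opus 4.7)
The plan is to couple the random graph process with a pair of binomial random graphs that sandwich all of $\{G_m\}_{m_0 \leq m \leq (1+\eps/6)m_0}$ simultaneously, then reduce the resilience analysis to an expansion argument inside the sparser member $G^-$, handling the tiny and atypical vertices via Lemma~\ref{lem:neighbourhood-atyp-tiny}. The argument closely mirrors \cite{nenadov2018resilience}*{Proposition~3.1}, with the observation that the properties extracted from the sandwich are robust enough to cover the whole batch of values of $m$ at once.

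Concretely, I would fix a small $\delta = \delta(\eps) > 0$, choose $p_0$ so that $\binom{n}{2} p_0$ is just below $m_0$, and $p' \leq \eps p_0 / 2$ so that $G^+ := G^- \cup G_{n, p'}$ has at least $(1+\eps/6) m_0$ edges w.h.p., and then couple the process to this pair via a uniform random permutation $\pi$ of $E(G^+) \setminus E(G^-)$, giving $G_m = G^- \cup \{\pi(1), \ldots, \pi(m - e(G^-))\}$ sandwiched as $G^- \subseteq G_m \subseteq G^+$ for every $m$ in the batch. I then condition on the high-probability conclusions of Lemmas~\ref{lem:neighbourhood-atyp-tiny}, \ref{lem:atypical-is-small}, and~\ref{lem:gnp-num-edges} for the pair $(G^-, G^+)$, together with the standard Chernoff bound $\deg_{G^+}(v) \leq \deg_{G^-}(v) + \eps n p_0$ at non-atypical vertices.

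The key reduction is the following: for any $m$ in the batch, any $H \subseteq G_m$ with $\deg_H(v) \leq (1/2 - \eps)\deg_{G_m}(v)$, and any vertex $v \notin \ATYP_{p_0, \delta}(G^-)$, the chain
\[
  \deg_{H \cap E(G^-)}(v) \leq (1/2 - \eps)\deg_{G^+}(v) \leq (1/2 - \eps)(1 + \eps/2)(1 + \delta) n p_0 \leq (1/2 - \eps/2)\deg_{G^-}(v)
\]
shows that the adversary's effective budget on the sparse graph $G^-$ strictly improves from $1/2 - \eps$ to $1/2 - \eps/2$, at the price of letting them remove every $G^-$-edge incident to the small set $\ATYP_{p_0, \delta}(G^-)$.

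What remains is a case analysis on potential cuts $(A, V \setminus A)$ of $G^- - (H \cap E(G^-))$ with $|A| \leq n/2$ that would disconnect the giant of $G_m$. Summing the budget across $A$ reduces the task to verifying $e_{G^-}(A, V \setminus A) > (2 - \eps)\, e_{G^-}(A) + O(|A \cap \ATYP_{p_0, \delta}(G^-)| \cdot n p_0)$; Lemma~\ref{lem:gnp-num-edges} handles this immediately when $|A|$ is polylogarithmic in $n$ or larger, and still handles the intermediate regime by noting $e_{G^-}(A) = O(|A|^2 p_0) \ll |A| n p_0 = O(e_{G^-}(A, V \setminus A))$. The main obstacle is the case of constant-size $A$, where expectations fall below $1$ and I must appeal directly to the structural content of Lemma~\ref{lem:neighbourhood-atyp-tiny}\,(\ref{neighbourhood-tiny}),(\ref{cycle-tiny}): such an $A$ contains at most two tiny vertices and these two cannot be mutually adjacent or share a short cycle, forcing every non-tiny vertex of $A$ to have a neighbour outside $A$ that the adversary cannot afford to delete --- this is exactly the spot where the matching lower bounds on the constants $1/6$ and $1/2$ bite. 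The $k$-connectivity statement is obtained by running the same cut argument inside the $k$-core after removing any candidate vertex separator $S \subseteq V$ with $|S| \leq k - 1$, where the additional hypothesis $\deg_{G_m - H}(v) \geq k$ built into Definition~\ref{def:strange-resilience} guarantees every vertex of the $k$-core retains at least one surviving edge after the deletion of $S$, so that the same expansion argument again produces the required cut edge.
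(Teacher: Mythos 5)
Your high-level skeleton (the sandwich $G^-\subseteq G_m\subseteq G^+$ via a random permutation of the extra edges, the sets $\TINY_{p_0,\delta}(G^-)$ and $\ATYP_{p_0,\delta}(G^-)$, and a cut/expansion argument split by the size of the candidate set) is the same as the paper's, but your key reduction breaks down. By restricting the analysis to $G^-\!-\!(H\cap E(G^-))$ and conceding to the adversary \emph{every} $G^-$-edge at atypical vertices, you discard exactly the constraint the proof lives on: the adversary's budget at each vertex is proportional to its degree in the giant (resp.\ $k$-core) of $G_m$, so even tiny and atypical vertices retain more than half of those edges. In your relaxed problem a vertex of the giant with $G^-$-degree $0$ or $1$, or a set $A$ made up largely of atypical vertices (there can be up to $n/\log n$ of them, with $G^-$-degrees as low as $\delta np_0$), can be cut off outright, so your target inequality $e_{G^-}(A,V\setminus A) > (2-\eps)e_{G^-}(A)+O(|A\cap\ATYP|\,np_0)$ is simply false for such sets and no contradiction can be extracted. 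The paper instead keeps the per-vertex constraint everywhere and derives the three-tier bound \eqref{eqn:min-degree-bounds} (tiny vertices keep $\max\{\lceil(1/2+\eps)\deg_G(v)\rceil,k\}$, atypical non-tiny keep $(\delta/2)np_0$, typical keep $(1/2+\eps/4)np_1$), and then rules out atypical-heavy sets by the averaging argument showing $|\Xt|>|\Xa|$ against property \ref{C-neighbourhoods}; nothing in your sketch replaces this step.

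The small-set case, which is the heart of the matter, is also not correct as sketched. ``Such an $A$ contains at most two tiny vertices'' does not follow from Lemma~\ref{lem:neighbourhood-atyp-tiny}\,(i): that lemma controls tiny vertices only within the $3$-neighbourhood of a single vertex, whereas an arbitrary (even constant-size) candidate set may contain many pairwise-distant tiny vertices. The correct statement is the fractional bound of Claim~\ref{cl:x-tiny-intersection}, $|X\cap\TINY|\le\lfloor 2|X|/3\rfloor$, proved by decomposing $G''[X\cap\TINY]$ into a matching plus an independent set and counting their surviving non-tiny neighbours inside $X$; for $k\ge2$ this crucially also uses the triangle property (Lemma~\ref{lem:neighbourhood-atyp-tiny}\,(iii)) to prevent the two endpoints of a tiny edge from sharing neighbours, a point absent from your proposal. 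Likewise, ``every non-tiny vertex of $A$ has a neighbour outside $A$ that the adversary cannot afford to delete'' is not the right mechanism---the adversary can delete any single crossing edge at a non-tiny vertex; what wins is that a typical vertex retains $(1/2+\eps/4)np_1\gg|X|$ edges, so once a constant fraction of $X$ is known to be typical the degree sum exceeds the bound on $2e_{G''}(X)$ from \ref{C-num-edges}. Finally, two smaller points: your intermediate-regime estimate $e_{G^-}(A)=O(|A|^2p_0)$ ignores the $c|A|\sqrt{np_0}$ term in Lemma~\ref{lem:gnp-num-edges} (harmless but should be carried), and the claim $|A|np_0=O(e_{G^-}(A,V\setminus A))$ presupposes precisely the control over tiny and atypical vertices in $A$ that still has to be established.
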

\begin{proof}
  Given $\eps$ let us define $\delta = \min{\{\eps/4,
  \delta_{\ref{lem:neighbourhood-atyp-tiny}}(\eps/2)\}}$ and $L =
  L_{\ref{lem:neighbourhood-atyp-tiny}}(\eps/2)$, and let $c =
  c_{\ref{lem:gnp-num-edges}}$. Take $p_0 = (1 - \eps/16) m_0 / \binom{n}{2}$,
  $p' = (\eps/2) p_0$, and let $G^+$ be the union of two independent copies of
  random graphs $G^- \sim G_{n, p_0}$ and $G_{n, p'}$. Then $G^+$ is distributed
  as $G_{n, p_1}$, where $p_1 = 1 - (1 - p_0)(1 - p')$. By
  Lemma~\ref{lem:gnp-num-edges} (or simply Chernoff's inequality) we have that
  the number of edges in $G^-$ is w.h.p.\ at most $m_0$ and the number of edges
  in $G^+$ is w.h.p.\ at least $(1 + \eps/6) m_0$.

  For the rest of the proof consider some $m_0 \leq m \leq (1 + \eps/6) m_0$. As
  the proof for both connectivity and $k$-connectivity (for $k \geq 2$) is
  identical, we treat them together and think of the graph we work with as the
  giant of $G_m$ in the former and as the $k$-core of $G_m$ in the latter. With
  this in mind, let $G \subseteq G_m$ be the giant of $G_m$, and respectively a
  subgraph obtained by iteratively removing all vertices of degree less that $k$
  from $G_m$ for $k \geq 2$. Let $V = V(G)$ denote the vertex set of $G$, and
  let $\TINY$ and $\ATYP$ be sets of vertices defined as:
  \[
    \TINY := \TINY_{p_0, \delta}(G^-) \cap V \quad \text{and} \quad \ATYP :=
    \ATYP_{p_0, \delta}(G^-) \cap V.
  \]

  It is a well-known fact that for $m \geq n\log n/6$ the size of the giant
  (resp.\ the $k$-core) of $G_m$ is at least $(1 - o(1))n$
  (cf.~\cites{frieze2015introduction, luczak1989sparse, luczak1991size}). If we
  show that for every graph $H$ whose vertex degrees fulfil
  \begin{align}\label{eqn:H-degree-bounds}
    \deg_H(v) \leq \min{\{ (1/2 - \eps)\deg_G(v),\ \deg_G(v) - k \}}
  \end{align}
  the graph $G - H$ is $k$-connected, then $G$ is $(1/2 - \eps, k)$-resilient
  with respect to $k$-connectivity.

  First, we list a series of properties that the graph $G$ satisfies, which we
  subsequently show are sufficient for proving the $k$-connectivity of $G - H$:
  \begin{enumerate}[leftmargin=2.8em, font={\bfseries\itshape}, label=(C\arabic*)]
    \item\label{C-num-edges} for all $X \subseteq V$ we have $e_G(X) \leq
      \binom{|X|}{2} p_1 + c|X| \sqrt{np_1}$,
    \item\label{C-neighbourhoods} for all $v \in V$ we have $|N_G^3(v) \cap
      \TINY| \leq 2$ and $|N_G(v) \cap \ATYP| \leq L$,
    \item\label{C-cycles} every cycle $C \subseteq G$ with $v(C) = 3$ contains
      at most one vertex from $\TINY$,
    \item\label{C-atyp-size} $|\ATYP| \leq \frac{n}{\log n}$.
  \end{enumerate}

  We show that the properties hold in $G^+$ and hence in every subgraph $G
  \subseteq G^+$. Property \ref{C-num-edges} is given by
  Lemma~\ref{lem:gnp-num-edges} applied to $G^+$ with $p_1$ (as $p$).
  Properties~\ref{C-neighbourhoods} and~\ref{C-cycles} hold by our choice of
  $\delta$ and $L$, and by Lemma~\ref{lem:neighbourhood-atyp-tiny} applied with
  $\eps/2$ (as $\eps$), $p_0$ and $p'$, since $p_0 \geq (1 + \eps/2) \log
  n/(3n)$ and $p' \leq \eps p_0$. Lastly, \ref{C-atyp-size} holds by Lemma
  \ref{lem:atypical-is-small}.

  Consider a graph $H$ on the same vertex set $V$ as $G$ which satisfies
  \eqref{eqn:H-degree-bounds}, and let $G' := G - H$. We prove the
  $k$-connectivity of $G'$ by showing that for every $S \subseteq V$ of size
  $|S| \leq k - 1$, the neighbourhood of every $X \subseteq V \setminus S$ is
  not completely contained in $X$ in the graph $G'' := G'[V \setminus S]$.
  Without loss of generality, we only consider $|X| \leq |V|/2$. Assuming
  towards a contradiction that all edges incident to the vertices belonging to
  $X$ have both endpoints in $X$ gives
  \begin{align}\label{eqn:eeq2sum}
    2 e_{G''}(X) = \sum_{v \in X} \deg_{G''}(v).
  \end{align}

  From \eqref{eqn:H-degree-bounds} we see that all vertices $v$ in $G'$ satisfy
  \begin{align}\label{eqn:min-degree-bounds}
    \deg_{G'}(v) \ge
    \begin{cases}
      \max{\{\ceil{(1/2 + \eps)\deg_G(v)}, k\}}, & \text{if } v \in \TINY, \\
      (\delta/2) np_0, & \text{if } v \in \ATYP \setminus \TINY, \\
      (1/2 + \eps/4) n p_1, & \text{otherwise},
    \end{cases}
  \end{align}
  where the last part follows from the fact that every vertex $v \in V \setminus
  \ATYP$ has degree at least $(1 - \delta) np_0$ in $G$ and thus
  \[
    \deg_{G'}(v) \geq (1/2 + \eps) \deg_G(v) \geq (1/2 + \eps)(1 - \delta)np_0
    \geq (1/2 + \eps/4)np_1,
  \]
  making use of our choice of $\delta$ and $p_1$ in the last inequality.

  We consider two cases depending on the size of $X$:
  \begin{enumerate*}[(I)]
    \item\label{I-small-x} $|X| \leq \frac{n}{1000}$ and
    \item\label{I-large-x} $|X| > \frac{n}{1000}$.
  \end{enumerate*}
  Let us look at \ref{I-small-x} first. The most critical point is to show that
  a significant part of the vertices in $X$ are actually not in $\TINY$. The
  next claim establishes precisely that.

  \begin{claim}\label{cl:x-tiny-intersection}
    $|X \cap \TINY| \leq \floor{2|X|/3}$.
  \end{claim}
  \begin{proof}
    Observe first that there cannot exist a path of length two in the induced
    subgraph $G''[X \cap \TINY]$. Indeed, if such a path exists, as it was a
    part of the giant $G$ initially, it follows that there was a vertex $v \in
    V$ with three tiny vertices (the ones on the path) in $N_G^3(v)$---a
    contradiction to \ref{C-neighbourhoods}. Consequently, the graph $G''[X \cap
    \TINY]$ consists only of isolated vertices and edges. Let $X_E$ and $X_V$ be
    the partition of $X \cap \TINY$ into a matching and an independent set and
    let
    \[
      N_E := \bigcup_{ \{u, v\} \in E(G''[X_E])} \big( N_{G''}(u) \cup N_{G''}(v)
      \big) \cap (X \setminus \TINY) \quad \text{and} \quad N_V := \bigcup_{u
      \in X_V} N_{G''}(u) \cap (X \setminus \TINY).
    \]
    If $k = 1$ then as every edge $\{u, v\} \in E(G''[X_E])$ was initially a part
    of the giant $G$, we have $|N_E| \geq |X_E|/2$ due to \ref{C-neighbourhoods}
    and \eqref{eqn:min-degree-bounds}. Additionally, if $k \geq 2$, by
    \eqref{eqn:min-degree-bounds} together with properties
    \ref{C-neighbourhoods} and \ref{C-cycles} one easily derives (assuming
    $|X_E| > 0$, otherwise $|N_E| \geq |X_E|$ trivially holds)
    \[
      |N_E| \geq (2k - 2)|X_E| - |S| \geq |X_E| + (2k - 3)|X_E| - k + 1 \geq
      |X_E|.
    \]
    Similarly, as no three vertices from $X_V$ can have a common neighbour in $X
    \setminus \TINY$ by \ref{C-neighbourhoods}, we deduce $|N_V| \geq
    \ceil{|X_V|/2}$. Indeed, every vertex has at least $k$ neighbours in $X
    \setminus \TINY$ and at most $k - 1$ are removed due to the removal of $S$;
    moreover, at most one other vertex in $X_V$ can share this remaining
    neighbour due to \ref{C-neighbourhoods}.

    Clearly now, if the number of vertices in $X \setminus \TINY$ is strictly
    smaller than $|X_E|/2 + \ceil{|X_V|/2}$ by the pigeonhole principle there is
    a vertex $v \in X \setminus \TINY$ which has at least three vertices from $X
    \cap \TINY$ in $N_{G''}^3(v)$, again a contradiction to
    \ref{C-neighbourhoods}. The claim readily follows.
  \end{proof}

  The remaining vertices can be partitioned into $X \setminus \TINY = \Xa \cup
  \Xt$ where
  \[
    \Xa := (\ATYP \setminus \TINY) \cap X \qquad \text{and} \qquad \Xt := X
    \setminus \ATYP.
  \]
  Next, we show that $|\Xt| > |\Xa|$. Assume towards contradiction that this is
  not the case. As $np_0 = \omega(1)$ and $|S| \leq k - 1$ we have
  \begin{align*}
    e_{G''}(\Xa, \Xt) &\osref{\eqref{eqn:min-degree-bounds}}\geq \big(
    (\delta/2)np_0 - (k - 1) \big) \cdot |\Xa| - 2 e_{G''}(\Xa) - e_{G''}(\Xa,
    \TINY) \\
    &\osref{\ref{C-neighbourhoods}}\geq (\delta/4) np_0 \cdot |\Xa| - 2L \cdot
    |\Xa| - 2 \cdot |\Xa| > L |\Xt|,
  \end{align*}
  where the last inequality follows from our assumption $|\Xa| \geq |\Xt|$.
  Thus, a simple averaging argument shows that there exists a vertex $u \in \Xt$
  with $\deg_{G''}(u, \Xa) > L$, which is a contradiction to
  \ref{C-neighbourhoods}. Together with Claim~\ref{cl:x-tiny-intersection} we
  conclude $|\Xt| \geq |X|/6$. Therefore, we have
  \[
    \sum_{v \in X} \deg_{G''}(v) \ge |\Xt| \cdot \frac{1}{2} n p_1 \geq
    \frac{|X|np_1}{12},
  \]
  where we again make use of \eqref{eqn:min-degree-bounds}. On the other hand,
  by \ref{C-num-edges} we also have $2e_{G''}(X) \leq |X|^2p_1 +
  2c|X|\sqrt{np_1}$, and as $|X| \leq n/1000$ the assumption \eqref{eqn:eeq2sum}
  is wrong. Thus, $N_{G''}(X) \setminus X \neq \varnothing$.

  We now consider case \ref{I-large-x}. By \ref{C-atyp-size}, at most $n/\log n$
  vertices are in $\ATYP$, which implies that at least $(1 - \eps/8) |X|$
  vertices are in $X \setminus \ATYP$, as $|X| > n/1000$. Thus, by
  \eqref{eqn:min-degree-bounds} the sum of the degrees of the vertices in $X$ in
  $G''$ can be bounded by
  \[
    \sum_{v \in X} \deg_{G''}(v) \geq (1 - \eps/8)|X| \cdot (1/2 + \eps/4)np_1.
  \]
  Combining \ref{C-num-edges} and the bound from above with
  equation~\eqref{eqn:eeq2sum} gives
  \[
    (1 - \eps/8)(1/2 + \eps/4)|X| np_1 \leq 2 \binom{|X|}{2} p_1 +
    2c|X|\sqrt{np_1} \leq (1/2 + \eps/16)|X|np_1,
  \]
  where the last inequality follows from the fact that $np_1 = \omega(1)$ and
  $|X| \leq n/2$---again a contradiction.

  In conclusion, there is no connected component of $G''$ of size at most
  $|V|/2$, which completes the proof of the proposition.
\end{proof}

Having Proposition~\ref{prop:graph-process} at hand we proceed to complete the
proof of the main result. For convenience of the reader we restate the theorem.

\conntheorem*
\begin{proof}
  With a slightly more careful inspection, one can see that the conclusion of
  Proposition~\ref{prop:graph-process} holds with probability $1 - e^{-\alpha
  \cdot C\log n}$ for every fixed $m \geq C n\log n$, some $\alpha > 0$, and for
  sufficiently large $C$ depending on $\eps$. Therefore, a union bound over all
  $m \geq C n\log n$ shows that with probability at least $1 - n^2 \cdot e^{-
  \alpha C\log n} = 1 - o(1)$, every $G_m$ is resilient with respect to
  connectivity. Note that it is also possible to draw the same conclusion from
  the results of Montgomery~\cite{montgomery2017} and Nenadov, Steger, and the
  second author~\cite{nenadov2018resilience} as connectivity is a necessary
  condition for Hamiltonicity (even starting at $m \geq \frac{1 + \eps}{2}n \log
  n$).

  For the smaller values of $m$, consider intervals of the form $[\frac{1 + i
  \eps}{6} n \log n, \frac{1 + (i + 1) \eps}{6} n \log n)$, for $i \in \{1,
  \ldots, C_\eps\}$, where $C_\eps$ is such that the last interval contains $Cn
  \log n$. For each fixed interval the conclusion of
  Proposition~\ref{prop:graph-process} holds with probability $1 - o(1)$, thus a
  union bound over constantly many intervals shows that it w.h.p.\ holds for all
  intervals simultaneously. This concludes the proof.
\end{proof}

The proof of Theorem~\ref{thm:main-thm-k-conn} is analogous. This immediately
implies Theorem~\ref{thm:hitting-time-conn} and
Theorem~\ref{thm:hitting-time-k-conn}.

We conclude by showing that if the adversary is allowed to remove slightly more
than a $(1/2)$-fraction of the edges incident to each vertex, then w.h.p.\ it is
possible to make the giant of $G_m$ disconnected. For simplicity we only show an
analogue of Proposition~\ref{prop:graph-process}. In order to capture all values
of $m$ one proceeds as in the proof of Theorem~\ref{thm:main-thm-conn}.

\begin{proposition}\label{prop:optimal-constant}
  For every $\eps > 0$ and integer $m_0 \geq \frac{1 + \eps}{6} n\log n$ the
  random graph process $\{G_i\}$ w.h.p.\ has the following property. For every
  integer $m_0 \leq m \leq (1 + \eps/6)m_0$, the giant of $G_m$ is not $(1/2 +
  \eps)$-resilient with respect to connectivity.
\end{proposition}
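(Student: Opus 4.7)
The plan is to produce, with high probability, a \emph{pendant path of length two} attached to the giant that serves as the adversary's witness of non-resilience for every $m$ in the stated interval simultaneously. Concretely, I aim to find an ordered triple of distinct vertices $(v_0, v_1, v_2)$ such that $v_0 v_1, v_1 v_2 \in E(G_m)$, $\deg_{G_m}(v_0) = 1$, $\deg_{G_m}(v_1) = 2$, and $v_2$ lies in the giant of $G_m$. The adversary then removes the single edge $v_1 v_2$; the degree cap $\deg_H(v) \leq (1/2+\eps)\deg_{G_m}(v)$ is respected at every vertex (nothing is removed at $v_0$; the ratio is exactly $1/2$ at $v_1$; at $v_2$ it is at most $1/\deg_{G_m}(v_2)\leq 1/2$ since $v_2$ has at least two neighbours in the giant), while $\{v_0, v_1\}$ becomes a connected component of $G_m - \{v_1 v_2\}$ disjoint from the rest of the giant.

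To locate a single triple that works uniformly in $m$, I reuse the sandwich $G^- \sim G_{n,p_0} \subseteq G_m \subseteq G^+ = G^- \cup G_{n,p'}$ from the proof of Proposition~\ref{prop:graph-process}. It suffices to find a triple with $v_0 v_1, v_1 v_2 \in E(G^-)$ and no further edges incident to $v_0$ or $v_1$ in $G^+$: then $\deg_{G_m}(v_0)=1$ and $\deg_{G_m}(v_1)=2$ in every intermediate $G_m$, and $v_2$ lies in the giant since all but $o(n)$ vertices of $G^-$ do. A routine first-moment computation gives
\[
  \E[X] \approx n^3\, p_0^2\, (1 - p_1)^{(n-2)+(n-3)} = n^{(1-3\eps)/3 + o(1)} (\log n)^2,
\]
which diverges for every sufficiently small $\eps>0$; the remaining range of $\eps$ is settled by monotonicity, since any subgraph $H$ certifying non-resilience at a smaller threshold certifies it at any larger one as well.

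The principal technical step is the variance bound. A standard second-moment argument, enumerating the handful of overlap patterns between two candidate triples (sharing one, two, or three vertices, and possibly sharing one of the two path edges), suffices: each non-trivial overlap either reduces to the single-triple estimate or imposes strictly stronger non-edge constraints in $G^+$, and hence contributes only a lower-order term to $\Var(X)$. Chebyshev's inequality then yields $X \geq 1$ with probability $1-o(1)$. Once such a triple is secured, the explicit adversarial choice $H = \{v_1 v_2\}$ disconnects the giant of $G_m$ for every $m \in [m_0, (1+\eps/6)m_0]$, establishing the claim.
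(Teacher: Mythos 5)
Your witness --- a pendant path $v_0v_1v_2$ with $\deg(v_0)=1$, $\deg(v_1)=2$ hanging off the giant --- is a legitimate certificate of non-$(1/2+\eps)$-resilience where it exists, but it does not exist throughout the range the proposition covers, and this is a genuine gap. The proposition quantifies over \emph{every} $m_0 \geq \frac{1+\eps}{6}n\log n$, so in particular over $m$ of order $n\log n$ with a large constant (and beyond), where the minimum degree of $G_m$ is w.h.p.\ much larger than $2$ and no vertex of degree one or two survives. Your first-moment computation silently takes $m_0$ at its minimal value: with $np_1 = c\log n$ the expectation of your count is $n^{1-2c+o(1)}$, which tends to $0$ as soon as $c > 1/2$, i.e.\ as soon as $m \gtrsim \frac{1}{4}n\log n$ (the paper itself notes in Section~2 that pendant paths of length two attached to the giant exist only for $m \leq (1/4-\eps)n\log n$). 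The appeal to ``monotonicity'' does not repair this: monotonicity transfers non-resilience across the resilience parameter $\eps$ (a witness for a smaller deletion budget is also a witness for a larger one), but the obstruction here is the \emph{density} $m_0$, and no choice of smaller $\eps'$ makes a degree-one vertex reappear at $m \geq \frac{1}{4}n\log n$. So your argument can at best prove the proposition for $m_0$ in a window just above $\frac{1+\eps}{6}n\log n$ with $\eps$ small, whereas the whole point of the proposition is that the constant $1/2$ cannot be improved anywhere from that point onwards in the process.

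The paper's proof is structurally different precisely because it must work at all densities: it takes an arbitrary equipartition $V = A' \cup B'$, uses Lemma~\ref{lem:neighbourhood-atyp-tiny} and an additional tree-counting claim (Claim~\ref{cl:degenerate-neighbourhoods}) to control the vertices whose crossing degree or total degree is atypical, places those vertices greedily on the side where they have the majority of their neighbours (with a final local repair step for tiny vertices), and then lets the adversary delete \emph{all} edges of the resulting cut $G[A,B]$; since every vertex ends up with at most a $(1/2+\eps)$-fraction of its edges crossing, this disconnects the giant into two linear-sized pieces at any density $m \geq \frac{1+\eps}{6}n\log n$. If you want to salvage your approach you would have to supply a second construction of this global type for the dense regime; as written, the proposal does not prove the stated proposition. (A minor further point: even in the sparse regime you would need to argue that the specific triple produced by the second-moment method has $v_2$ in the giant --- ``all but $o(n)$ vertices are in the giant'' does not by itself apply to a vertex conditioned to have a degree-two neighbour --- though this is fixable by, e.g., additionally requiring $v_2$ to have degree $\geq \delta np_0$ in $G^-$.)
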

\begin{proof}
  Given $\eps$, let $\delta = \min{\{\eps/32,
  \delta_{\ref{lem:neighbourhood-atyp-tiny}}(\eps/2)\}}$, and $L =
  L_{\ref{lem:neighbourhood-atyp-tiny}}(\eps/2)$. Take $p_0 = (1 - \eps/16) m_0
  / \binom{n}{2}$, $p' = (\eps/2) p_0$, and let $G^+$ be the union of two
  independent copies of random graphs $G^- \sim G_{n, p_0}$ and $G_{n, p'}$.
  Then $G^+$ is distributed as $G_{n, p_1}$, where $p_1 = 1 - (1 - p_0)(1 -
  p')$. Similarly as before, we have that the number of edges in $G^-$ is
  w.h.p.\ at most $m_0$ and the number of edges in $G^+$ is w.h.p.\ at least $(1
  + \eps/6) m_0$. For the rest of the proof consider some $m_0 \leq m \leq (1 +
  \eps/6) m_0$.

  Let $G$ be the giant of $G_m$ and $V$ its vertex set. Suppose there exists a
  partition $V = A \cup B$ such that the following property is satisfied:
  \begin{equation}\label{eq:deg-condition}
    \text{for all } v \in V \colon \deg_{G[A, B]}(v) \leq (1/2 + \eps)\deg_G(v).
    \tag{$\star$}
  \end{equation}
  Then by taking $H$ to be a subgraph consisting of all edges in $G[A, B]$ the
  graph $G - H$ is not connected. In the remainder we show that such a partition
  indeed exists.

  Consider first an arbitrary equipartition $V = A' \cup B'$ (i.e.\ $A'$ and
  $B'$ differ by at most one in size). As before, let
  \[
    \TINY := \TINY_{p_0, \delta}(G^-) \cap V \qquad \text{and} \qquad \ATYP :=
    \ATYP_{p_0, \delta}(G^-) \cap V.
  \]
  and assume that the property \ref{C-neighbourhoods} holds, which follows from
  Lemma~\ref{lem:neighbourhood-atyp-tiny} applied with $\eps/2$ (as $\eps$). Let
  $D$ be defined as:
  \[
    D := \{ v \in V \colon \deg_{G[A', B']}(v) > (1/2 + \delta)np_1 \}.
  \]

  \begin{claim}\label{cl:degenerate-neighbourhoods}
    There exists a positive constant $L'(\eps)$ such that w.h.p.\ for all $v \in
    V$ we have $|N_{G^+}(v) \cap D| < L'$.
  \end{claim}
  The proof follows an analogous argument as the proof of
  \cite{nenadov2018resilience}*{Lemma 2.7}.
  \begin{proof}
    We show that, for a sufficiently large constant $L' = L'(\eps)$, if $T
    \subseteq G^+$ is a tree with $L' \leq v(T) \leq 2L'$ vertices then it
    contains at most $L' - 1$ vertices from $D$, which is sufficient for the
    claim to hold. Clearly, if for a vertex $v \in V$ there are $L'$ vertices in
    $N_{G^+}(v) \cap D$ then taking a tree containing $v$ and every such vertex
    $u$ together with the edge $\{v, u\}$ contradicts the former.

    Let $T \subseteq K_n$ be a tree with $L' \leq v(T) \leq 2L'$ and $S
    \subseteq V(T)$ a set of size exactly $L'$. Let $\cE_T$ denote the event
    that $T \subseteq G^+$ and $\cE_{T, S}$ that every $v \in S$ satisfies
    $|N_{G^+[A', B']}(v) \setminus V(T)| > (1/2 + \delta/2)np_1$. By Chernoff's
    inequality we have that a fixed vertex $v \in S$ satisfies this with
    probability at most
    \[
      \Pr \big[ \Bin\big( \tfrac{n}{2}, p_1 \big) > (1/2 + \delta/2)np_1 \big]
      \leq e^{- \gamma np_1},
    \]
    for some $\gamma > 0$ depending on $\delta$ (and thus $\eps$). As these
    events are independent for different vertices $v, u \in S$, the probability
    that $\cE_{T, S}$ holds is bounded by
    \[
      \Pr[\cE_{T, S}] \leq (e^{- \gamma np_1})^{L'}.
    \]
    Note that $\Pr[\cE_T] = p_1^{v(T) - 1}$. Hence, a simple union bound over
    all pairs $(T, S)$ shows that the probability that some $\cE_T \land \cE_{T,
    S}$ happens is at most
    \[
      \Pr \Big[ \bigcup_{(T, S)} (\cE_T \land \cE_{T, S}) \Big] \leq \sum_{t =
      L'}^{2L'} \binom{n}{t} \binom{t}{L'} t^{t - 2} \cdot \Pr[\cE_T \land
      \cE_{T, S}] \leq \sum_{t = L'}^{2L'} n^t t^{3L'} \cdot p_1^{t - 1} \cdot
      e^{- L'\gamma np_1}.
    \]
    As $np_1 = \Omega(\log n)$ and $n^t p_1^{t - 1} \cdot e^{-L'\gamma np_1}$ is
    decreasing in $p_1$, this finally implies
    \[
      \Pr \Big[ \bigcup_{(T, S)} (\cE_T \land \cE_{T, S}) \Big] = O_{\eps}\big(
      n \cdot (\log n)^{2L'} \cdot e^{-L'\gamma np_1} \big) = o(1),
    \]
    for $L'$ large enough depending on $\eps$.
  \end{proof}

  We now construct the partition $V = A \cup B$ as follows. In the beginning set
  $A_0 := A' \setminus (\ATYP \cup D)$ and $B_0 := B' \setminus (\ATYP \cup D)$.
  We sequentially add vertices to $A_0$ and $B_0$, first those of $(\ATYP \cup D)
  \setminus \TINY$ and then of $\TINY$, in an arbitrary order following a simple
  rule: if in step $i \geq 1$ we have $\deg_G(v, A_{i - 1}) \geq \deg_G(v, B_{i
  - 1})$ set $A_i := A_{i - 1} \cup \{v\}$ and $B_i := B_{i - 1}$; otherwise set
  $A_i := A_{i - 1}$ and $B_i := B_{i - 1} \cup \{v\}$. Lastly, set $A := A_m$
  and $B := B_m$ for $m := |\ATYP \cup D|$.

  Note that the degree of a vertex added at step $i \geq 0$ in graph $G[A_i,
  B_i]$ can increase by at most $L + L' + 2$ until the end of the process by
  \ref{C-neighbourhoods} and Claim~\ref{cl:degenerate-neighbourhoods}.
  Therefore, for every vertex $v \in V \setminus (\ATYP \cup D)$ we have
  \[
    \deg_{G[A, B]}(v) \leq (1/2 + \delta)np_1 + L + L' + 2 \leq (1/2 +
    2\delta)np_1 \leq (1/2 + \eps/2)\deg_G(v),
  \]
  as $\deg_G(v) \geq (1 - \delta)np_0$, $p_1 \leq (1 + \eps/2)p_0$, and due to
  our choice of $\delta$. Similarly, for $v \in (\ATYP \cup D) \setminus \TINY$
  we have
  \[
    \deg_{G[A, B]}(v) \leq \deg_G(v)/2 + L + L' + 2 \leq (1/2 +
    \eps/2)\deg_G(v),
  \]
  as $\deg_G(v) \geq \delta np_0$ and due to our choice of $\delta$.

  Lastly, let us look at tiny vertices. If after the end of the process there is
  no vertex $v \in \TINY$ which has $\deg_{G[A, B]}(v) > \deg_G(v)/2$, we are
  done. Hence, assume w.l.o.g.\ that $v \in A$ is such a vertex. In particular,
  this implies that $v$ had a neighbour $u \in \TINY$ added to $B$ after $v$
  itself. Since $u \in B$, it implies that there exists $w \in N_G(u, B)$.
  Consequently, due to \ref{C-neighbourhoods}, none of the neighbours of $v$ in
  $A$ belong to $\TINY$ and we may move $v$ to $B$ without harming the degree of
  any vertex significantly (it changes by at most one for non-tiny vertices,
  which is negligible compared to their degree). Moreover, by doing this we
  cannot make additional vertices $v' \in \TINY$ have $\deg_{G[A, B]}(v') >
  \deg_G(v')/2$ and thus the rearranging process eventually stops with all
  vertices satisfying $\eqref{eq:deg-condition}$. This completes the proof.
\end{proof}

\paragraph*{Acknowledgements.} We would like to thank the anonymous reviewers
for their thorough reading of our paper and helpful comments.

\bibliographystyle{abbrv}
\bibliography{references}

\end{document}